\newtheorem{Theorem}{Theorem}[section]
\newtheorem{Proposition}[Theorem]{Proposition}
\newtheorem{Conjecture}[Theorem]{Conjecture}
\newtheorem{Definition}[Theorem]{Definition}
\def \dim{{\mbox {dim}}\,}
\def\V{\mbox{Var}}
\def\R\re
\def\V{\bf V}
\def \re{{\mathbb R}}
\def \C{{\mathbb C}}
\def \V{{\bf V}}
\newcommand{\id}{\mathrm{Id}}
\newcommand{\abs}[1]{\lvert #1 \rvert}
\newcommand{\norm}[1]{\lVert #1 \rVert}
\newcommand{\mC}{\mathbb{C}}
\begin{document}
\title[Tensor tomography on surfaces]{Tensor tomography on surfaces}

\author[G.P. Paternain]{Gabriel P. Paternain}
\address{ Department of Pure Mathematics and Mathematical Statistics, University of Cambridge, Cambridge CB3 0WB, UK.}
\email{g.p.paternain@dpmms.cam.ac.uk}

\author[M. Salo]{Mikko Salo}
\address{Department of Mathematics  and Statistics, University of Jyv\"askyl\"a.}
\email{mikko.j.salo@jyu.fi}

\author[G. Uhlmann]{Gunther Uhlmann}
\address{Department of Mathematics, University of Washington and UC Irvine.}
\email{gunther@math.washington.edu}




\begin{abstract} 
We show that on simple surfaces the geodesic ray transform acting on solenoidal symmetric
tensor fields of arbitrary order is injective. This solves a long standing inverse problem in the two-dimensional case.
\end{abstract}

\maketitle

\section{Introduction}

Let $(M,g)$ be a compact orientable two-dimensional manifold with smooth boundary.  
We consider the geodesic ray transform acting on symmetric $m$-tensor fields on $M$. When the metric is Euclidean and $m=0$ this transform reduces to the usual X-ray transform obtained by integrating functions along straight lines. More generally, given a symmetric (covariant) $m$-tensor field $f = f_{i_1 \cdots i_m} \,dx^{i_1} \otimes \cdots \otimes \,dx^{i_m}$ on $M$ we define the corresponding function on $SM$ by 
$$
f(x,v) = f_{i_1 \cdots i_m} v^{i_1} \cdots v^{i_m}.
$$
Here $SM = (x,v) \in TM \,;\, \abs{v} = 1 \}$ is the unit circle bundle. Geodesics going from $\partial M$ into $M$ can be parametrized by $\partial_+ (SM) = \{(x,v) \in SM \,;\, x \in \partial M, \langle v,\nu \rangle \leq 0 \}$ where $\nu$ is the outer unit normal vector to $\partial M$. For $(x,v) \in SM$ we let $t \mapsto \gamma(t,x,v)$ be the geodesic starting from $x$ in direction $v$. We assume that $(M,g)$ is nontrapping, which means that the time $\tau(x,v)$ when the geodesic $\gamma(t,x,v)$ exits $M$ is finite for each $(x,v)$  in $SM$.

The ray transform of $f$ is defined by 
$$
If(x,v) = \int_0^{\tau(x,v)} f(\varphi_t(x,v)) \,dt, \quad (x,v) \in \partial_+(SM),
$$
where $\varphi_t$ denotes the geodesic flow of the Riemannian metric $g$ acting on $SM$.
If $h$ is a symmetric $(m-1)$-tensor field, its inner derivative $dh$ is a symmetric $m$-tensor field defined by $dh=\sigma\nabla h$, where $\sigma$ denotes
symmetrization and $\nabla$ is the Levi-Civita connection. A direct calculation in local coordinates shows that
$$
dh(x,v)= Xh(x,v),
$$
where $X$ is the geodesic vector field associated with $\varphi_t$.
If additionally $h|_{\partial M} = 0$, then one clearly has $I(dh) = 0$. The ray transform on symmetric $m$-tensors is said to be $s$-injective if these are the only elements in the kernel. The terminology arises from the fact that
any tensor field $f$ may be written uniquely as $f=f^s+dh$, where $f^s$
is a symmetric $m$-tensor with zero divergence and $h$ is an $(m-1)$-tensor
with $h|_{\partial M} = 0$ (cf. \cite{Sh}). The tensor fields $f^s$ and
$dh$ are called respectively the {\it solenoidal} and {\it potential} parts
of the tensor $f$. Saying that $I$ is $s$-injective is saying precisely that
$I$ is injective on the set of solenoidal tensors.

In this paper we will assume that $(M,g)$ is {\it simple}, a notion that naturally arises in the context of the boundary rigidity problem \cite{Mi}. We recall that a Riemannian manifold with boundary is said to be simple if
the boundary is strictly convex and given any point $p$ in $M$ the exponential map $\exp_p$ is a diffeomorphism onto $M$. In particular, a simple manifold is nontrapping.

 The next result shows that the ray transform on simple surfaces is $s$-injective for tensors of any rank. This settles a long standing question in the
two-dimensional case (cf. \cite{PS} and \cite[Problem 1.1.2]{Sh}).

\begin{Theorem} \label{theorem_sinjectivity}
Let $(M,g)$ be a simple 2D manifold and let $m \geq 0$. If $f$ is a smooth symmetric $m$-tensor field on $M$ which satisfies $If = 0$, then $f = dh$ for some smooth symmetric $(m-1)$-tensor field $h$ on $M$ with $h|_{\partial M} = 0$. (If $m=0$, then $f=0$.)
\end{Theorem}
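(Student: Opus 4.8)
The plan is to translate the vanishing of $If$ into a transport equation on $SM$, analyze it through its fiberwise Fourier expansion, and use the existence of holomorphic integrating factors on simple surfaces as the decisive new input.

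\emph{Reduction to a transport equation.} Given $f$ with $If=0$, put $u(x,v)=\int_0^{\tau(x,v)}f(\varphi_t(x,v))\,dt$ for $(x,v)\in SM$. Then $Xu=-f$ on $SM$, and $u$ vanishes on $\partial_+(SM)$ by construction; the hypothesis $If=0$ is exactly the statement that $u$ also vanishes on $\partial_-(SM)$, so $u|_{\partial(SM)}=0$. On a simple surface $u$ is smooth up to the boundary — this is the regularity statement that really uses simplicity (strict convexity and the absence of conjugate points, with the customary care at glancing directions). So we may assume $u\in C^\infty(SM)$, $u|_{\partial(SM)}=0$, $Xu=-f$. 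Expanding in the circle fibers gives $u=\sum_{k\in\Z}u_k$ with $-iVu_k=ku_k$; since $f$ is the symbol of a symmetric $m$-tensor, $f_k=0$ whenever $|k|>m$ or $k\not\equiv m\pmod 2$. Because $X$ is injective on functions vanishing on $\partial(SM)$ (a function constant along geodesics that vanishes at their endpoints is zero), splitting $Xu=-f$ into even- and odd-degree parts forces $u$ to have Fourier support only in degrees $k\equiv m-1\pmod 2$. The theorem follows once we show $u_k=0$ for all $|k|\ge m$: then $u$ is the symbol of a smooth symmetric $(m-1)$-tensor $h$, $u|_{\partial(SM)}=0$ gives $h|_{\partial M}=0$, and $f=-Xu=-dh$ by the identity $dh(x,v)=Xh(x,v)$, so $-h$ is the desired potential (for $m=0$ the conclusion becomes $u\equiv 0$, hence $f=0$).

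\emph{Eliminating the high modes.} Writing $X=\eta_++\eta_-$ with the Guillemin--Kazhdan operators $\eta_\pm\colon\Omega_k\to\Omega_{k\pm1}$, the equation $Xu=-f$ reads $\eta_+u_{k-1}+\eta_-u_{k+1}=-f_k$ for every $k$, hence $\eta_+u_{k-1}+\eta_-u_{k+1}=0$ once $|k|\ge m+1$. It follows that the holomorphic tail $\alpha:=\sum_{k\ge m+1}u_k$ satisfies $\alpha|_{\partial(SM)}=0$ and $X\alpha=\eta_-u_{m+1}$, a function of pure degree $m$, and symmetrically for the antiholomorphic tail $\sum_{k\le-(m+1)}u_k$. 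Everything thus reduces to showing that a holomorphic function on $SM$ that vanishes on $\partial(SM)$ and whose image under $X$ has fiberwise degree $\le m$ must vanish identically. This is where holomorphic integrating factors come in. On a simple surface the two-dimensional conformal structure provides the fiberwise Hilbert transform $\mathcal H$ with the Pestov--Uhlmann commutator formula $[\mathcal H,X]w=\X w_0+(\X w)_0$, and the ray transform on functions has surjective adjoint $I_0^*$ (equivalently, a Laplace-type equation on $M$ is solvable); together these yield, for every attenuation $a\in\Omega_0\oplus\Omega_1$, a \emph{holomorphic} $w\in C^\infty(SM)$ with $Xw=a$, and likewise antiholomorphic integrating factors for $a\in\Omega_0\oplus\Omega_{-1}$. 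Conjugating the transport equation for the holomorphic tail by suitable such factors lowers the degree of its source step by step, eventually placing the problem within reach of the classical Pestov energy identity, which — together with the absence of conjugate points on a simple surface — forces the tail to vanish. The antiholomorphic tail is handled in the same way, so $u$ has degree $\le m-1$.

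\emph{The main obstacle.} The difficulty, and the reason the higher-rank case eluded the earlier arguments, is precisely this last step. For $m\le 1$ the Pestov identity (and its second-order version) already bounds the degree of $u$ outright, but for $m\ge 2$ it does not, and one genuinely needs the holomorphic integrating factors. Their construction is intrinsically two-dimensional — isothermal coordinates, the Hilbert transform on the circle fibers, the mapping properties of the geodesic ray transform on functions over simple surfaces — and the technical heart of the proof lies in showing that these integrating factors are smooth up to $\partial M$ and that the degree-lowering procedure is consistent across all of the (a priori infinitely many) high Fourier modes of $u$ at once.
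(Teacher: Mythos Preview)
Your reduction to the transport equation, the parity splitting, and the identification of the holomorphic tail $\alpha=\sum_{k\ge m+1}u_k$ with $X\alpha=\eta_- u_{m+1}\in\Omega_m$ are all correct and match the paper's setup. The paper likewise reduces Theorem~\ref{theorem_sinjectivity} to showing that $u_k=0$ for $|k|\ge m$ (its Proposition~\ref{proposition_finitedegree}), and, like you, isolates holomorphic integrating factors as the decisive new input.

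The gap is in the sentence ``Conjugating the transport equation for the holomorphic tail by suitable such factors lowers the degree of its source step by step, eventually placing the problem within reach of the classical Pestov energy identity.'' This is the entire content of the theorem for $m\ge 2$, and you have not said what the conjugation is or why it lowers the degree. A naive attempt --- multiplying $\alpha$ by $e^w$ with $w$ holomorphic and $Xw$ a $1$-form --- does not lower the degree of $X(e^w\alpha)$: the term $(Xw)\alpha$ has Fourier support starting at $m$, the same as before. So ``step by step'' does not work in the obvious way, and you have not indicated any mechanism that does. (A minor point: your integrating-factor statement should have $a\in\Omega_{-1}\oplus\Omega_1$, i.e.\ a $1$-form, not $\Omega_0\oplus\Omega_1$.) You also phrase the reduced problem as ``a holomorphic function on $SM$ that vanishes on $\partial(SM)$ and whose image under $X$ has fiberwise degree $\le m$ must vanish'', which is false without the extra information that the function has Fourier support in $k\ge m+1$.

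The paper closes this gap in two ways. The shorter one (Section~\ref{sec:secondproof}) does \emph{not} proceed step by step: it multiplies $u$ by a single nonvanishing $h\in\Omega_m$ (concretely $h=e^{im\theta}$ in isothermal coordinates), so that $hu$ solves $(X+A)(hu)=-hf$ with $A:=-h^{-1}Xh$ a genuine $1$-form and $hf$ holomorphic; one holomorphic integrating factor then removes $A$, and Proposition~\ref{prop:gm} (your base case) finishes. The longer one (Section~\ref{sec:firstproof}) conjugates by $e^{sw}$ where $Xw=-i\varphi$ for a primitive $\varphi$ of the area form, and then applies the Pestov identity \emph{with attenuation} (Proposition~\ref{prop_pestov_attenuation}); the curvature term $(\star F_A\,Vv,v)$ contributes $s\sum|k|\|v_k\|^2$, and for $s$ large this positivity absorbs the bad term $\|(X+A)v\|^2-\|V(X+A)v\|^2$ after a telescoping computation using the commutator $[\mu_+,\mu_-]$. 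Either route is short once stated, but neither is the vague degree-lowering you describe; you need to supply one of these mechanisms (or an equivalent) to have a proof.
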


The geodesic ray transform is closely related to the boundary rigidity problem of determining a metric on a compact Riemannian manifold from its boundary distance function. See \cite{SU, I} for recent reviews. The case $m=0$, that is, the integration of a function along geodesics, is the linearization of the boundary rigidity problem in a fixed conformal class. The standard X-ray transform, where one integrates a function along straight lines, corresponds to the case of the Euclidean metric and is the basis of medical imaging techniques such as CT and PET. The case of integration along more general geodesics arises in geophysical imaging in determining the inner structure of the Earth since the speed of elastic waves generally increases with depth, thus curving the rays back to the Earth surface. It also arises in ultrasound imaging, where the Riemannian metric models the anisotropic index of refraction. Uniqueness and stability for the case $m=0$ was shown by Mukhometov \cite{Mu} on simple surfaces, and Fredholm type inversion formulas were given in \cite{pestovuhlmann_imrn}.

The case $m=1$ corresponds to the geodesic Doppler transform in which one integrates a vector field along geodesics. 
This transform appears in ultrasound tomography to detect tumors using blood flow measurements and also in non-invasive industrial measurements for reconstructing the velocity of a moving fluid. In the case $m=1$, $s$-injectivity  was shown by Anikonov and Romanov \cite{AR} and stability estimates were proven in \cite{SU2}.

The integration of tensors of order two along geodesics arises as the linearization of the boundary rigidity problem and the linear problem is known as deformation boundary rigidity. Sharafutdinov \cite{Sh1} showed $s$-injectivity in this case for
simple surfaces. Sharafutdinov's proof follows the outline of the proof by Pestov and Uhlmann \cite{PU} of the non-linear boundary rigidity problem for simple surfaces and it is certainly more involved than our proof of Theorem \ref{theorem_sinjectivity}, which is independent of the solution to the non-linear problem. 

The case of tensor fields of rank four
describes the perturbation of travel times of compressional waves  
propagating in slightly anisotropic elastic media; see Chapter 7 of
\cite{Sh}. There are very few general results for tensor fields of order higher  
than two. For the case when the underlying metric is Euclidean, 
Sharafutdinov gave explicit reconstruction formulas for the solenoidal part
\cite{Sh}. For results obtained under curvature restrictions see
\cite{Sh,Pe} and \cite{Sh3/2,D} where non-convex boundaries are also considered.

 The only results without curvature assumptions that we are aware of for tensors of order higher than two are \cite{Sh1/2,SSU}. In \cite{SSU} it is shown that $I^*I$ is a pseudodifferential operator
of order $-1$ on a slightly larger simple manifold.
Moreover it
is elliptic on solenoidal tensor fields. Here $I^*$
denotes the adjoint of $I$ with respect to natural inner products. Thus we can recover the wave front set of a distribution solution of 
$If=0.$  Using this result, the analysis of \cite{SU1,SU2} and Theorem \ref{theorem_sinjectivity} it is straightforward to derive stability estimates.

We give two proofs of Theorem \ref{theorem_sinjectivity}.
These proofs are partially inspired
by the proof of the Kodaira vanishing theorem in Complex Geometry \cite{GH}.
The theorem states that if $M$ is a K\"ahler manifold, $K_{M}$ is its canonical line bundle and $E$ is a {\it positive} holomorphic line bundle, then the groups $H^{q}(M,K_{M}\otimes E)$ vanish for any $q>0$. The positivity of $E$ means that there exists a Hermitian metric on $E$ such that $iF_{\nabla}$ is a positive differential form, where $F_{\nabla}$ is the curvature of the canonical connection $\nabla$ induced by the Hermitian metric. Via an $L^2$ energy identity, this positivity of the curvature implies the vanishing of the relevant harmonic forms.
Tensoring with $E$ will be translated in our setting as introducing an appropriate attenuation given by a suitable connection into the relevant transport equation. This attenuation will play the role of $\nabla$ above and a version of the Pestov identity will play the role of the $L^2$ energy identity.
 Of course, this is just an analogy and 
the technical details are very different in the two settings, but the analogy is powerful enough to provide the key idea for solving the transport equation.
One actually has a choice of different possible connections as attenuations, and this is what produces the two different proofs.
This approach was already employed in \cite{PSU}, but it was surprising to us to discover that it could also be successfully used to solve the tensor tomography problem for simple surfaces. We remark that it is still an open problem to establish Theorem \ref{theorem_sinjectivity} when $\dim M\geq 3$ and $m\geq 2$.

After some preliminaries we provide in Section \ref{sec:pestov} a new point of view on the Pestov identity which makes its derivation quite natural. We also explain here why there is an essential difference between the cases $m=0,1$ and $m\geq 2$. In Section \ref{sec:firstproof}
we prove Theorem \ref{theorem_sinjectivity} choosing as attenuation a primitive of the area form. We give an alternative proof in Section \ref{sec:secondproof} choosing as attenuation the Levi-Civita connection. This alternative proof will also yield a more general result. In order to state it, let $I_{m}$ denote the ray transform acting on symmetric $m$-tensors and let $C^{\infty}_{\alpha}(\partial_+(SM))$ denote the set of functions $h\in C^{\infty}(\partial_{+}(SM))$ such that the function $h_{\psi}(x,v) = h(\varphi_{-\tau(x,-v)}(x,v))$ is smooth on $SM$. In natural $L^2$ inner products, the adjoint of $I_0$ is the operator 
$$I_0^*: C^{\infty}_{\alpha}(\partial_+(SM)) \to C^{\infty}(M), \ \ I_0^* h(x) = \int_{S_x} h_{\psi}(x,v) \,dS_x(v).$$
Here $S_x = \{ (x,v) \in TM \,;\, \abs{v} = 1 \}$ and $dS_x$ is the volume form on $S_x$. For more details see \cite{PU}, where it is also proved that $I_0^*$ is surjective on any simple manifold.

\begin{Theorem} Let $(M,g)$ be a compact nontrapping surface with strictly convex smooth boundary. Suppose in addition that $I_0$ and $I_1$ are $s$-injective and that $I_0^*$ is surjective. If $f$ is a smooth symmetric $m$-tensor field on $M$ which satisfies $I_{m}f = 0$, then $f = dh$ for some smooth symmetric $(m-1)$-tensor field $h$ on $M$ with $h|_{\partial M} = 0$.
\label{thm:extension}
\end{Theorem}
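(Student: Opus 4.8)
The plan is to move the problem to the unit circle bundle $SM$ and to analyze the resulting transport equation through the Fourier decomposition of functions on $SM$ in the fibre variable. Since $I_m f = 0$, the function $u(x,v) = \int_0^{\tau(x,v)} f(\varphi_t(x,v))\,dt$ satisfies $Xu = -f$ on $SM$ and vanishes on all of $\partial(SM)$, and the strict convexity of $\partial M$ (together with nontrapping) guarantees, by the standard regularity argument, that $u$ is smooth on $SM$. Write $u = \sum_{k\in\Z}u_k$ with $Vu_k = iku_k$, where $V$ is the vertical vector field. The sum of the modes of $u$ whose parity differs from that of $m-1$ solves $Xv = 0$ with $v|_{\partial(SM)}=0$, hence vanishes because $(M,g)$ is nontrapping; so I may assume $u_k = 0$ unless $k\equiv m-1\pmod 2$. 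Splitting $X = \eta_+ + \eta_-$ into its components $\eta_\pm\colon\Omega_k\to\Omega_{k\pm1}$, the equation $Xu=-f$ becomes the system $\eta_+u_{k-1}+\eta_-u_{k+1}=-f_k$ for all $k$, with $f_k=0$ whenever $\abs k>m$.

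The heart of the matter is to prove that $u$ has finite degree, i.e. $u_k=0$ for all sufficiently large $\abs k$; this is the step in which all three hypotheses are used, with the $s$-injectivity of $I_0$ and $I_1$ entering at the lowest orders (one may organize the argument as an induction on $m$ with these two base cases). It is here that the analogy with the Kodaira vanishing theorem operates: the role of tensoring with a positive line bundle is played by introducing an attenuation into the transport equation — for this proof, the one coming from the Levi-Civita connection. The essential tool is the existence of holomorphic and anti-holomorphic integrating factors, that is, smooth functions on $SM$ having only nonnegative (respectively nonpositive) Fourier modes and solving prescribed transport equations; their construction rests exactly on the surjectivity of $I_0^*$, in the manner of Pestov--Uhlmann. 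Multiplying the transport equation by such factors reduces it to an attenuated equation of low degree, and a Pestov-type energy identity adapted to this attenuation — the analogue of the $L^2$ energy identity of Kodaira's proof — combined with the $s$-injectivity of $I_0$ and $I_1$, forces all but finitely many Fourier modes of $u$ to vanish.

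Granting that $u$ has finite degree, say $u_k=0$ for $\abs k>N$ with $N\ge m$, I would conclude by descent in the degree. Reading the system in degree $N+1$, where $f_{N+1}=0$ since $N+1>m$, gives $\eta_+u_N=0$; hence $u_N$ is a holomorphic section of a line bundle over $M$ vanishing on $\partial M$, which forces $u_N\equiv0$, and by reality $u_{-N}\equiv0$. Since the parity constraint makes the degree fall in steps of two, iterating gives $u_k=0$ for every $\abs k\ge m$, so $u$ has degree at most $m-1$. A function on $SM$ of degree at most $m-1$ and parity $m-1$ is the restriction to $SM$ of a smooth symmetric $(m-1)$-tensor field $h$, with $h|_{\partial M}=0$ because $u|_{\partial(SM)}=0$; then $Xu=-f$ reads $f=dh$ after a harmless sign change in $h$. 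When $m=0$ the parity constraint already forces $u_0=0$, and the same descent yields $u\equiv0$, hence $f=0$.

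The main obstacle, I expect, is the finite-degree step: building the holomorphic integrating factors from the surjectivity of $I_0^*$, and setting up the attenuated Pestov identity so that its curvature term — which has no pointwise sign here, unlike in the classical Kodaira situation — can be controlled using the injectivity of $I_1$. By contrast, the reduction to the transport equation, the parity reduction, and the final descent are routine.
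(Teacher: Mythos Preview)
Your scaffolding is right (transport equation on $SM$, Fourier decomposition, parity reduction, holomorphic integrating factors from the surjectivity of $I_0^*$, and the identification of degree $\leq m-1$ functions with symmetric $(m-1)$-tensors), but the central mechanism you propose for the finite-degree step is not the one that works under these hypotheses, and as stated it is a genuine gap. You plan to run a Pestov-type energy identity for the attenuated equation and to control its Riemann-curvature term via the $s$-injectivity of $I_1$. However, the nonnegativity of $\norm{(X+A)Vv}^2-(KVv,Vv)$ in the Pestov identity comes from the absence of conjugate points, i.e.\ from simplicity, not from any ray-transform injectivity; no mechanism is known (and you offer none) by which the $s$-injectivity of $I_1$ could substitute for it. This is exactly why the paper's Pestov-based ``first proof'' is only asserted for simple surfaces and is not used to prove Theorem~\ref{thm:extension}.

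The proof that actually works here is the paper's ``second proof'', which avoids the Pestov identity entirely. Since a nontrapping surface with strictly convex boundary is a disk, one may choose global isothermal coordinates and take a nowhere-vanishing $h\in\Omega_m$ (e.g.\ $h=e^{im\theta}$). Setting $A:=-h^{-1}Xh$, which is a complex $1$-form, one has $(X+A)(hu)=-hf$ with $hf$ holomorphic. Surjectivity of $I_0^*$ gives a \emph{holomorphic} $w\in C^\infty(SM)$ with $Xw=A$, so $X(e^w hu)=-e^w hf$ is an \emph{unattenuated} transport equation with holomorphic right-hand side and vanishing boundary values. The $s$-injectivity of $I_0$ and $I_1$ now enters not through an energy identity but through the Hilbert-transform commutator $[H,X]u=X_\perp u_0+(X_\perp u)_0$: applying $\id-iH$ and splitting into even and odd parts produces a $0$-tensor and a solenoidal $1$-tensor in the kernels of $I_0$ and $I_1$, hence both vanish, and one concludes that $e^w hu$ is holomorphic with zero $0$-mode. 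This gives $u_k=0$ for $k\le -m$ in one stroke; the antiholomorphic version gives $u_k=0$ for $k\ge m$. Your separate finite-degree step and $\eta_+$-descent are therefore unnecessary, and the place you expected to need the Pestov identity is exactly where the Hilbert-transform argument does the work.
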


Given this result, it seems natural to conjecture that $s$-injectivity on tensors should hold on nontrapping surfaces.

\begin{Conjecture} Let $(M,g)$ be a compact nontrapping surface with strictly convex boundary. If $f$ is a smooth symmetric $m$-tensor field on $M$ which satisfies $If = 0$, then $f = dh$ for some smooth symmetric $(m-1)$-tensor field $h$ on $M$ with $h|_{\partial M} = 0$. (If $m=0$, then $f=0$.)
\label{thm:conjecture}
\end{Conjecture}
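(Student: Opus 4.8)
\noindent\textbf{Proof proposal for Theorem~\ref{thm:extension}.}
Since $I_m f = 0$, I would begin with the primitive of $f$ along the geodesic flow,
\[
u(x,v) = \int_0^{\tau(x,v)} f(\varphi_t(x,v))\,dt, \qquad (x,v)\in SM .
\]
On $\partial_+(SM)$ this equals $I_m f = 0$, and on $\partial_-(SM)$ it vanishes because there $\tau\equiv 0$; hence $u|_{\partial(SM)}=0$, and differentiating along the flow gives $Xu = -f$ on $SM$. The first role of the hypotheses appears here: surjectivity of $I_0^*$ guarantees, via the regularity property of the transport equation (cf.\ \cite{PU}), that $u\in C^\infty(SM)$. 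I would then expand $u$ in fibrewise Fourier series, $L^2(SM)=\bigoplus_{k\in\Z}\Omega_k$ with $\Omega_k$ the eigenspaces of $-iV$, where $V$ is the vertical vector field, writing $u=\sum_k u_k$, $u_k\in\Omega_k$, and use that on a surface $X=\eta_+ + \eta_-$ with $\eta_\pm\colon\Omega_k\to\Omega_{k\pm1}$. A symmetric $m$-tensor has Fourier content only in degrees $|k|\le m$ with $k\equiv m\pmod 2$, so $Xu=-f$ becomes $\eta_+ u_{k-1}+\eta_- u_{k+1}=-f_k$ for all $k$, and in particular
\[
\eta_+ u_{k-1}+\eta_- u_{k+1}=0 \qquad\text{whenever } |k|\ge m+1 .
\]

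The theorem then reduces to a single claim: \emph{$u$ has degree at most $m-1$}, i.e.\ $u_k=0$ for $|k|\ge m$. Granting this, first note $u$ has the parity of $m-1$: splitting $u$ into even- and odd-degree parts, the ``wrong-parity'' part $w$ satisfies $Xw=0$ with $w|_{\partial(SM)}=0$, hence $w=0$ since $(M,g)$ is nontrapping. A smooth function on $SM$ of degree $\le m-1$ and parity $m-1$ is exactly the restriction of a unique smooth symmetric $(m-1)$-tensor field $h_0$; then $u|_{\partial(SM)}=0$ gives $h_0|_{\partial M}=0$, while $dh_0=Xh_0=Xu=-f$, so $f=d(-h_0)$. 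For $m=0$ and $m=1$ the conclusion is immediate from the assumed $s$-injectivity of $I_0$ and $I_1$, so the content of the theorem is the degree bound for $m\ge 2$.

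To prove the degree bound I would follow the ``second proof'' and attenuate the transport equation by the Levi-Civita connection on the powers of the canonical line bundle — this is precisely where the analogy with the Kodaira vanishing theorem is used, tensoring by a positive line bundle being replaced by the insertion of a connection whose curvature contributes an error term of a prescribed sign. One natural route is to isolate the ``holomorphic tail'' $v:=\sum_{k\ge m}u_k$; the relations above show $v|_{\partial(SM)}=0$ and that $Xv$ has degree $\le m$, so $v$ solves a transport equation with low-degree source and zero boundary data, and it suffices to show $v=0$ (the conjugate tail $\sum_{k\le-m}u_k$ being handled symmetrically). The Pestov identity for the attenuated operator, applied to $v$, is an $L^2$ energy identity in which the Gauss curvature $K$ is replaced by $K$ plus the curvature of the attenuation; choosing the attenuation suitably — the analogue of the choice of Hermitian metric in Kodaira's argument — makes the resulting quadratic form positive regardless of the sign of $K$, and this positivity forces $v=0$. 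The remaining hypotheses enter here: $s$-injectivity of $I_0$ together with surjectivity of $I_0^*$ provides the holomorphic and antiholomorphic integrating factors for the Levi-Civita connection needed to run the energy identity, and $s$-injectivity of $I_1$ disposes of the low-order residue at the bottom of the tail.

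The main obstacle, I expect, is exactly this last step: arranging the attenuated Pestov identity so that its error term is pointwise of a fixed sign throughout $M$ — the genuine counterpart of ``positivity of the line bundle'' — and ensuring that this positivity annihilates \emph{every} Fourier mode of degree $\ge m$ rather than leaving an uncontrolled finite tail. It is also here that the dichotomy noted in the introduction bites: for $m\le 1$ the ordinary (unattenuated) Pestov identity already suffices, whereas for $m\ge 2$ the attenuation is indispensable. A secondary, bookkeeping, difficulty is the precise dictionary between symmetric tensor fields and their representatives as functions on $SM$ — the $|v|^2$-ambiguity and the smoothness and boundary values of individual Fourier modes — which needs care but is routine.
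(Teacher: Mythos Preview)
First, a statement-level issue: the statement you were given is Conjecture~\ref{thm:conjecture}, which the paper explicitly leaves open and does not prove. Your proposal is headed as a proof of Theorem~\ref{thm:extension} and freely uses its extra hypotheses ($s$-injectivity of $I_0$ and $I_1$, surjectivity of $I_0^*$); none of these are available for the Conjecture, so as an argument for the Conjecture there is an unbridged gap right at the start.

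Treating your outline instead as a proof of Theorem~\ref{thm:extension} and comparing with the paper: you correctly reduce to showing that the transport solution $u$ has degree $\le m-1$, and you correctly point to the Levi-Civita attenuation. But you then propose to obtain the degree bound via the attenuated \emph{Pestov identity} applied to a tail of $u$. That is the mechanism of the paper's \emph{first} proof (Section~\ref{sec:firstproof}), and it relies on the index-form lower bound $\norm{(X+A)Vv}^2-(KVv,Vv)\ge 0$ from \cite[Lemma~6.5]{PSU}, which is proved for \emph{simple} surfaces and is not available under the mere nontrapping hypothesis of Theorem~\ref{thm:extension}. The paper's second proof --- the one that actually yields Theorem~\ref{thm:extension} --- avoids the Pestov identity entirely: one multiplies $u$ by a nonvanishing $h\in\Omega_m$ (concretely $h=e^{im\theta}$), obtaining $(X+A)(hu)=-hf$ with $A=-h^{-1}Xh$ and $hf$ holomorphic; a holomorphic integrating factor $w$ with $Xw=A$ (this is where surjectivity of $I_0^*$ enters, via Proposition~\ref{thm_holomorphic_integrating_factors}) converts this to the unattenuated equation $X(e^w hu)=-e^w hf$ with holomorphic right-hand side; and then Proposition~\ref{prop:gm} (this is where $s$-injectivity of $I_0$ and $I_1$ enters) forces $e^w hu$ to be holomorphic with vanishing $0$-th mode, hence $u_k=0$ for $k\le -m$. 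So the integrating factor is used to \emph{remove} the connection and reduce to the base case $m\le 1$, not to feed a curvature term into an energy identity as you describe.

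One minor correction: the smoothness of $u$ comes from Proposition~\ref{prop_transport_regularity} and requires only the nontrapping/strict convexity assumption; surjectivity of $I_0^*$ plays no role there.
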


\subsection*{Acknowledgements}

M.S. was supported in part by the Academy of Finland, and G.U. was partly supported by NSF and a Rothschild Distinguished Visiting Fellowship at the Isaac Newton Institute. The authors would like to express their gratitude to the Newton Institute and the organizers of the program on Inverse Problems in 2011 where this work was carried out. We also thank the referees for their constructive and useful comments.

\section{Preliminaries}\label{sec:prelim}

Let $(M,g)$ be a compact oriented two dimensional Riemannian manifold with smooth boundary
$\partial M$. As usual $SM$ will denote the unit circle bundle which is a compact 3-manifold with boundary given by $\partial(SM)=\{(x,v)\in SM:\;x\in \partial M\}$.

Let $X$ denote the vector field associated with the geodesic flow $\varphi_{t}$.
Since $M$ is assumed oriented there is a circle action on the fibers of $SM$ with infinitesimal generator $V$ called the {\it vertical vector field}. It is possible to complete the pair $X,V$ to a global frame
of $T(SM)$ by considering the vector field $X_{\perp}:=[X,V]$. There are two additional structure equations given by $X=[V,X_{\perp}]$ and $[X,X_{\perp}]=-KV$
where $K$ is the Gaussian curvature of the surface. Using this frame we can define a Riemannian metric on $SM$ by declaring $\{X,X_{\perp},V\}$ to be an orthonormal basis and the volume form of this metric will be denoted by $d\Sigma^3$. The fact that $\{ X, X_{\perp}, V \}$ are orthonormal together with the commutator formulas implies that the Lie derivative of $d\Sigma^3$ along the three vector fields vanishes, in ther words, the three vector fields preserve the volume form $d\Sigma^3$. We refer the reader to \cite{SiTh} for details on the assertions in this paragraph.

It will be convenient for later purposes and for completness sake to explicitly write down the three vector fields locally. We can always choose isothermal coordinates $(x_{1},x_{2})$ so that the metric
can be written as $ds^2=e^{2\lambda}(dx_{1}^2+dx_{2}^2)$ where $\lambda$ is a smooth
real-valued function of $x=(x_{1},x_{2})$. This gives coordinates $(x_{1},x_{2},\theta)$ on $SM$ where
$\theta$ is the angle between a unit vector $v$ and $\partial/\partial x_{1}$.
In these coordinates we may write $V=\partial/\partial\theta$ and
\[X=e^{-\lambda}\left(\cos\theta\frac{\partial}{\partial x_{1}}+
\sin\theta\frac{\partial}{\partial x_{2}}+
\left(-\frac{\partial \lambda}{\partial x_{1}}\sin\theta+\frac{\partial\lambda}{\partial x_{2}}\cos\theta\right)\frac{\partial}{\partial \theta}\right),\]
\[X_{\perp}=-e^{-\lambda}\left(-\sin\theta\frac{\partial}{\partial x_{1}}+
\cos\theta\frac{\partial}{\partial x_{2}}-
\left(\frac{\partial \lambda}{\partial x_{1}}\cos\theta+\frac{\partial \lambda}{\partial x_{2}}\sin\theta\right)\frac{\partial}{\partial \theta}\right).\]

Given functions $u,v:SM\to \C$ we consider the
inner product
\[(u,v) =\int_{SM}u\bar{v}\,d\Sigma^3.\]
Since $X, X_{\perp}, V$ are volume preserving we have $(Vu,v) = -(u,Vv)$ for $u, v \in C^{\infty}(SM)$, and if additionally $u|_{\partial(SM)} = 0$ or $v|_{\partial(SM)} = 0$ then also $(Xu,v) = -(u,Xv)$ and $(X_{\perp} u, v) = -(u, X_{\perp} v)$.

The space $L^{2}(SM)$ decomposes orthogonally
as a direct sum
\[L^{2}(SM)=\bigoplus_{k\in\mathbb Z}H_{k}\]
where $H_k$ is the eigenspace of $-iV$ corresponding to the eigenvalue $k$.
A function $u\in L^{2}(SM)$ has a Fourier series expansion
\[u=\sum_{k=-\infty}^{\infty}u_{k},\]
where $u_{k}\in H_k$. Also $\|u\|^2=\sum\|u_k\|^2$, where $\|u\|^2=(u,u)^{1/2}$. The even and odd parts of $u$ with respect to velocity are given by 
$$
u_+ := \sum_{k \text{ even}} u_k, \qquad u_- := \sum_{k \text{ odd}} u_k.
$$
In the $(x,\theta)$-coordinates previously introduced we may write
\[u_{k}(x,\theta)=\left( \frac{1}{2\pi} \int_0^{2\pi} u(x,t) e^{-ikt} \,dt \right) e^{ik\theta}=\tilde{u}_{k}(x)e^{ik\theta}.\]
Observe that for $k\geq 0$, $u_k$ may be identified with a section of the $k$-th tensor
power of the canonical line bundle; the identification takes $u_k$ into $\tilde{u}_{k}e^{k\lambda}(dz)^k$ where
$z=x_{1}+ix_{2}$.

\begin{Definition} A function $u:SM\to\C$ is said to be holomorphic
if $u_{k}=0$ for all $k<0$. Similarly, $u$ is said to be antiholomorphic if $u_{k}=0$ for all $k>0$.
\end{Definition}

We will also employ the fiberwise Hilbert transform $H: C^{\infty}(SM) \to C^{\infty}(SM)$, defined in terms of Fourier coefficients as 
$$
Hu_k := -i \,\text{sgn}(k) u_k.
$$
Here $\text{sgn}(k)$ is the sign of $k$, with the convention $\text{sgn}(0) = 0$. Thus, $u$ is holomorphic iff $(\id - iH)u = u_0$ and antiholomorphic iff $(\id + iH)u = u_0$. The commutator of the Hilbert transform and the geodesic vector field was computed in \cite{PU}:

\begin{Proposition} \label{prop:hxcommutator}
If $u \in C^{\infty}(SM)$, then $[H,X]u = X_{\perp} u_0 + (X_{\perp} u)_0$.
\end{Proposition}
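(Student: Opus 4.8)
The plan is to reduce the identity to a description of how $X$ acts on the orthogonal splitting $L^2(SM) = \bigoplus_k H_k$. First I would decompose $X$ into its degree-shifting parts. From the structure equations, which give $[V,X] = -X_\perp$ and $[V,X_\perp] = X$, one computes $[V, X \pm iX_\perp] = -X_\perp \pm iX = \pm i(X \pm iX_\perp)$; writing $\mu_\pm := \tfrac12(X \pm iX_\perp)$ this reads $[V,\mu_\pm] = \pm i\mu_\pm$. Combined with $Vu_k = iku_k$ it follows that $\mu_\pm$ maps $H_k$ into $H_{k\pm1}$, so that $X = \mu_+ + \mu_-$ is precisely the splitting of $X$ into its degree-raising and degree-lowering parts and $X_\perp = -i(\mu_+ - \mu_-)$. (Alternatively these facts can be read directly off the local coordinate formulas for $X$ and $X_\perp$ in Section~\ref{sec:prelim}, whose coefficients are affine combinations of $\cos\theta$ and $\sin\theta$, hence of $e^{\pm i\theta}$.) For $u\in C^\infty(SM)$ all Fourier series involved converge in $C^\infty$, so the manipulations below are legitimate.

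Next I would compute the commutators $[H,\mu_+]$ and $[H,\mu_-]$ one Fourier mode at a time. Since $\mu_+ u_k \in H_{k+1}$, $\mu_- u_k \in H_{k-1}$, and $H$ acts on $H_j$ by the scalar $-i\,\text{sgn}(j)$, for each $k$ one gets
$$[H,\mu_+]u_k = -i\bigl(\text{sgn}(k+1)-\text{sgn}(k)\bigr)\mu_+ u_k, \qquad [H,\mu_-]u_k = -i\bigl(\text{sgn}(k-1)-\text{sgn}(k)\bigr)\mu_- u_k.$$
The scalar $\text{sgn}(k+1)-\text{sgn}(k)$ vanishes for every $k$ except $k \in \{0,-1\}$, where it equals $1$, and $\text{sgn}(k-1)-\text{sgn}(k)$ vanishes except at $k \in \{0,1\}$, where it equals $-1$. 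Summing over $k$ in $[H,X] = [H,\mu_+] + [H,\mu_-]$ leaves only these boundary contributions:
$$[H,X]u = -i\mu_+ u_0 - i\mu_+ u_{-1} + i\mu_- u_0 + i\mu_- u_1 = -i(\mu_+ - \mu_-)u_0 - i\mu_+ u_{-1} + i\mu_- u_1.$$

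Finally I would match the right-hand side with the claimed expression. By the definition of $\mu_\pm$ one has $X_\perp u_0 = -i(\mu_+ - \mu_-)u_0$, which is the first term. For the second, $X_\perp u = -i\sum_k(\mu_+ u_k - \mu_- u_k)$, and the only pieces landing in $H_0$ are $\mu_+ u_{-1}$ and $\mu_- u_1$, so $(X_\perp u)_0 = -i\mu_+ u_{-1} + i\mu_- u_1$. Adding the two gives $[H,X]u = X_\perp u_0 + (X_\perp u)_0$.

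I expect the only delicate point to be the behaviour of $\text{sgn}$ at the origin: because $\text{sgn}(0) = 0$ rather than $\pm1$, the naive expectation $[H,X] = 0$ fails precisely at the bottom modes $k = 0, \pm1$, and it is exactly those few leftover terms that reassemble into $X_\perp u_0 + (X_\perp u)_0$. Everything else is routine bookkeeping with Fourier modes.
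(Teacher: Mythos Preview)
Your proof is correct. The paper does not actually give its own proof of this proposition; it simply cites \cite{PU}. Your argument---splitting $X$ into the raising/lowering operators $\tfrac12(X\pm iX_\perp)$ and observing that the commutator with $H$ vanishes except on the modes $k\in\{-1,0,1\}$ because that is where $\text{sgn}$ jumps---is exactly the standard way this identity is derived, and all the computations check out.

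One small notational remark: later in Section~\ref{sec:firstproof} the paper reserves the symbols $\mu_\pm$ for the \emph{attenuated} operators $\eta_\pm + A_\pm$, and uses $\eta_\pm := \tfrac12(X\pm iX_\perp)$ for the unattenuated ones you are working with here. If this proof is to be inserted into the paper you should rename your $\mu_\pm$ to $\eta_\pm$ to avoid a clash.
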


The next proposition concerns holomorphic solutions of transport equations. In the case of simple surfaces this was proved in \cite[Proposition 5.1]{SaU}, however here we note that it is true whenever the ray transform on $0$-tensors and $1$-tensors is $s$-injective.

\begin{Proposition} Let $(M,g)$ be a compact nontrapping surface with strictly convex boundary, and assume that $I_0$ and $I_1$ are $s$-injective. If $f$ is a smooth
holomorphic (antiholomorphic) function on $SM$ and if $u\in C^{\infty}(SM)$
satisfies
\[Xu=-f \text{ in } SM, \quad u|_{\partial(SM)} = 0,\]
then $u$ is holomorphic (antiholomorphic) and $u_0=0$.
\label{prop:gm}
\end{Proposition}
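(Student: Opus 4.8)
The plan is to argue by induction on the "lowest" Fourier mode appearing in $u$, using the transport equation together with the Pestov-type identities (equivalently, the $s$-injectivity hypotheses applied via the ray transform on $0$- and $1$-tensors). Write $f = \sum_{k \geq 0} f_k$, so $Xf$ splits across Fourier modes. Decompose $u = \sum_{k} u_k$ and look at the most negative mode. Since $X$ maps $H_k$ into $H_{k-1} \oplus H_{k+1}$, the equation $Xu = -f$ read in degree $k$ becomes a coupled first-order system relating $u_{k-1}, u_{k+1}$ (via the $\eta_{\pm}$ operators, i.e. the holomorphic/antiholomorphic parts of $X$) to $f_k$. The key structural fact is that, because $f$ is holomorphic, the negative modes of $u$ satisfy a \emph{closed} homogeneous subsystem: for $k < 0$ the mode $u_k$ only sees $f_k = 0$.

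First I would isolate the most negative mode. Suppose $u_k \neq 0$ for some $k \leq -1$, and let $-N$ (with $N \geq 1$) be the smallest index with $u_{-N} \neq 0$. From the degree $-N-1$ component of $Xu = -f$ we get $\eta_- u_{-N} = 0$ (since $f_{-N-1} = 0$ and $u_{-N-1} = 0$ by minimality), where $\eta_-$ is the "lowering" part of $X$. A function killed by $\eta_-$ whose only nonzero mode is $u_{-N}$ is precisely an antiholomorphic function of the form considered in the classical result: it is the symbol of an $(N)$-tensor satisfying a Beltrami-type equation, and $-\bar\partial$-type analysis (or: it equals $Xw$ for a suitable $w$ and has vanishing ray transform) forces, under $s$-injectivity of $I_N$ — which follows from that of $I_0, I_1$ by the very Theorem \ref{thm:extension} being proved... — hmm, that would be circular. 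So instead I would run the argument the way \cite{SaU} does at the level of $0$- and $1$-tensors only: reduce the statement about a single mode $u_{-N}$ killed by $\eta_-$ to the injectivity of $I_1$ (for the first reduction) by a boundary-value/holomorphic-integrating-factor trick, peeling off one degree at a time using the commutator formula in Proposition \ref{prop:hxcommutator}. Concretely, set $w := (\id + iH)u$ or a suitable modification; using $[H,X]u = X_\perp u_0 + (X_\perp u)_0$ one shows $Xw$ is (anti)holomorphic of low degree, and iterating the fiberwise Hilbert transform with the transport equation grinds the negative part of $u$ down to $u_{-1}$, whose vanishing is governed by $I_1$, and then to $u_0$, governed by $I_0$.

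Here is the cleaner route I would actually take: apply $\id - iH$ (the holomorphic projector) to the equation $Xu = -f$. Since $f$ is holomorphic, $(\id - iH)f = f - f_0 + \text{(mode }0\text{)}$ contributions, and using Proposition \ref{prop:hxcommutator} to commute $H$ past $X$, one obtains $X\big((\id - iH)u\big) = -f + \text{[explicit terms in } u_0, (X_\perp u)_0, (Xu)_0\text{]}$, i.e. the holomorphic part $u_h := (\id-iH)u$ satisfies a transport equation $X u_h = -\tilde f$ with $\tilde f$ still holomorphic and with $u_h|_{\partial(SM)} = 0$. Then $v := u - u_h$ is antiholomorphic with a finite Fourier expansion issue removed; examining $Xv$ and using that $v$ contains only modes $\leq 0$, the equation $Xv = -(f - \tilde f)$ with the right-hand side supported in mode $\leq 0$ and holomorphic forces it to be supported in mode $0$ only, so $v_{-1}, v_{-2}, \dots$ solve a homogeneous chain. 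The bottom of that chain, $v_{-1}$ together with the mode-$0$ and mode-$(-2)$ relations, yields a $1$-tensor (or function) with vanishing ray transform — hence zero by $s$-injectivity of $I_1$ (resp. $I_0$) — and then one climbs back up the chain to kill all negative modes, concluding $u = u_h$ is holomorphic and that additionally $u_0 = 0$ because the mode-$0$ part satisfies $Xu_0 \in H_1 \oplus H_{-1}$ with the mode-$(-1)$ part forced to vanish, reducing $u_0$ itself to a function with zero $0$-tensor ray transform.

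The main obstacle is organizing the descent/ascent in the Fourier degree without circularity: the temptation is to invoke $s$-injectivity of $I_m$ for large $m$, but the hypotheses only give $I_0$ and $I_1$. The real content is that the transport equation plus holomorphicity of $f$ collapses the \emph{whole} negative tail of $u$ onto a problem that is genuinely only about $I_0$ and $I_1$ — this is exactly the mechanism of \cite[Proposition 5.1]{SaU}, and the work is in verifying that each commutator term produced by Proposition \ref{prop:hxcommutator} lands in mode $0$ (so it does not reintroduce new negative modes) and that the boundary condition $u|_{\partial(SM)}=0$ is preserved at every stage. I expect the bookkeeping with $\eta_\pm$ and the vanishing of the intermediate ray transforms to be the part requiring genuine care; everything else is the formal machinery already set up in the Preliminaries section.
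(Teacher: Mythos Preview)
Your cleaner route is essentially the paper's argument --- apply $(\id - iH)$ to $u$, use Proposition~\ref{prop:hxcommutator} to commute, and reduce to the injectivity of $I_0$ and $I_1$ --- but your execution has three concrete problems that prevent it from being a proof.

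First, your projectors are reversed. With the paper's convention $Hu_k = -i\,\mathrm{sgn}(k)u_k$, one has $(\id - iH)u_k = (1-\mathrm{sgn}(k))u_k$, which \emph{kills} positive modes and \emph{keeps} negative ones; thus $w := (\id - iH)u = u_0 + 2\sum_{k<0} u_k$ is the antiholomorphic part, not the holomorphic one. In particular, when $f$ is holomorphic you get the clean identity $(\id-iH)f = f_0$, not the vague ``$f - f_0 + \text{mode }0$'' expression you wrote. This reversal propagates through your whole argument (your $u_h$ and $v$ swap roles).

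Second, and more substantively, you propose an iterative ``descent/ascent'' along the Fourier chain, but none is needed. Once you compute
\[
Xw = -(\id-iH)f + i[H,X]u = -f_0 + iX_\perp u_0 + i(X_\perp u)_0,
\]
the right-hand side already lives entirely in degrees $-1,0,1$. Splitting $w = w_+ + w_-$ into even and odd parts gives $Xw_- = -f_0 + i(X_\perp u)_0$ (a $0$-tensor) and $Xw_+ = iX_\perp u_0$ (a $1$-tensor), each with $w_\pm|_{\partial(SM)}=0$, so $I_0$ and $I_1$ apply \emph{directly} in one step. There is no chain to climb; the commutator formula does all the collapsing for you. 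Your talk of ``grinding the negative part of $u$ down to $u_{-1}$'' and ``climbing back up'' suggests you have not seen this.

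Third, you never address why $s$-injectivity of $I_1$ (rather than full injectivity) suffices: the $1$-form $-iX_\perp u_0$ is $-i\star d u_0$, which is co-closed, hence solenoidal. Without this observation the hypothesis on $I_1$ does not bite.

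Finally, your abandoned first approach has an additional structural flaw beyond the circularity you noticed: there is no reason a smooth $u$ should have a \emph{lowest} nonzero Fourier mode $u_{-N}$ at all; the negative tail can be infinite.
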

\begin{proof}
Suppose that $f$ is holomorphic (the antiholomorphic case is analogous). Using Proposition \ref{prop:hxcommutator}, the function $w = (\id - iH)u \in C^{\infty}(SM)$ satisfies 
$$
Xw = (\id - iH)Xu + i[H,X]u = -(\id - iH)f + i X_{\perp} u_0 + i (X_{\perp} u)_0.
$$
But $(\id - iH)f = f_0$, so splitting in even and odd parts gives 
$$
Xw_- = -f_0 + i(X_{\perp} u)_0, \quad Xw_+ = i X_{\perp} u_0.
$$
Since $u|_{\partial(SM)} = 0$, we have $w_{\pm}|_{\partial(SM)} = 0$. Using this boundary condition we have $IXw_{\pm} = 0$, so the ray transforms of the $0$-tensor $f_0 - i(X_{\perp} u)_0$ and the $1$-tensor $-iX_{\perp} u_0$ vanish. By our injectivity assumption, $f_0 - i(X_{\perp} u)_0 = 0$ and $-iX_{\perp} u_0 = 0$ (note that $-iX_{\perp} u_0$ corresponds to a solenoidal $1$-form). In particular 
$$
Xw_{\pm} = 0.
$$
By the boundary condition for $w_{\pm}$, we obtain $w_{\pm} = 0$. This implies $(\id - iH) u = 0$, so $u$ is holomorphic and $u_0 = 0$.
\end{proof}

Without a doubt, the result that makes everything possible is the existence of holomorphic integrating factors for the transport equation. This is true on nontrapping surfaces whenever $I_0^*$ is surjective. The special case of simple surfaces was covered in \cite[Theorem 4.1]{PSU}, following a similar result for $0$-forms in \cite{SaU}.

\begin{Proposition} \label{thm_holomorphic_integrating_factors}
Let $(M,g)$ be a compact nontrapping surface with strictly convex boundary, and assume that $I_0^*: C^{\infty}_{\alpha}(\partial_+(SM)) \to C^{\infty}(M)$ is surjective. If $A_j \,dx^j$ is a smooth $1$-form on $M$ and $A(x,v) = A_j v^j$, then there exist a holomorphic $w \in C^{\infty}(SM)$ and an antiholomorphic $\tilde{w} \in C^{\infty}(SM)$ such that $Xw = X\tilde{w} = -A$.
\end{Proposition}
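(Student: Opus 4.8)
My plan is to reduce the statement to the construction of the holomorphic integrating factor alone, and then to build it from a smooth first integral of $X$ supplied by the surjectivity of $I_0^*$, using the fiberwise Hilbert transform.

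First, since $X$ is a real vector field we have $\overline{Xu}=X\bar u$, and the complex conjugate of a holomorphic function on $SM$ is antiholomorphic. Hence it suffices to show that for every smooth (possibly complex) $1$-form $A$ on $M$ there is a holomorphic $w\in C^\infty(SM)$ with $Xw=-A$: applying this to the conjugate $1$-form $\bar A$ and conjugating the resulting function produces the antiholomorphic $\tilde w$. Next I would rephrase the hypothesis. By the formula for $I_0^*$ recalled above, together with the fact from \cite{PU} that $h\mapsto h_\psi$ identifies $C^\infty_\alpha(\partial_+(SM))$ with the space of smooth first integrals $\{p\in C^\infty(SM):Xp=0\}$ and satisfies $I_0^*h=2\pi(h_\psi)_0$, surjectivity of $I_0^*$ is equivalent to the statement that for every $\rho\in C^\infty(M)$ there is $p\in C^\infty(SM)$ with $Xp=0$ and $p_0=\rho$; call this $(\star)$. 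For any such $p$ the function $(\id+iH)p$ is holomorphic with zeroth Fourier mode $p_0$, and Proposition \ref{prop:hxcommutator} together with $Xp=0$ gives
$$X\big((\id+iH)p\big)=-iX_\perp p_0-i(X_\perp p)_0,$$
whose first term is a $1$-form on $M$ (Fourier degrees $\pm1$) and whose second term is a function (degree $0$). This is the mechanism that converts a smooth first integral into a holomorphic function whose $X$-image is prescribed up to a controlled lower-order error.

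The construction of $w$ then has two steps. (i) Write $A=a_{1}+a_{-1}$ according to Fourier degree. The assignment $\phi\mapsto(X\phi)_{-1}$ is a Cauchy--Riemann type operator $C^\infty(M)\to H_{-1}$, so I would solve $(Xw_{0})_{-1}=-a_{-1}$ for some $w_{0}\in C^\infty(M)$, which is solvable with a smooth solution by standard elliptic theory on a surface with boundary. Since $w_{0}\in H_{0}$ it is holomorphic, and $A+Xw_{0}$ now has pure Fourier degree $+1$; replacing $w$ by $w-w_0$ reduces the problem to an attenuation $b\in H_{1}$. (ii) For such a $b$, I would produce a holomorphic $w'$ with $Xw'=-b$ by iterating the mechanism above: one corrects the degree $\{-1,0,1\}$ error in $X((\id+iH)p)$ by re-applying $(\star)$, and shows that the resulting correction operator gains regularity — this is the point at which one uses that the associated solution and back-projection operators are smoothing, in the spirit of $I^{*}I$ being a pseudodifferential operator of order $-1$ — so that a Neumann series / Fredholm argument closes, with $(\star)$ supplying exactly the surjectivity needed to remove the finite-dimensional obstruction. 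Then $w:=w'+w_{0}$ is the desired holomorphic integrating factor.

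I expect step (ii) to be the main obstacle; in essence it is a regularity statement. If one ignores smoothness, the equation $Xw=-A$ for holomorphic $w$ simply unravels into an infinite cascade of Cauchy--Riemann type equations on $M$ for the successive Fourier modes $w_{k}$, each individually solvable; the genuine difficulty — and the precise reason the hypothesis on $I_0^*$ cannot be dropped — is to solve this cascade coherently so that $\sum_{k}w_{k}$ is an honest smooth function on $SM$ rather than a merely formal series. The remaining ingredients (the reduction to the holomorphic case, the cancellation of the degree $-1$ part, and the commutator identity) are routine.
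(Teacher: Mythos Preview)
Your reduction to the holomorphic case by conjugation and your reformulation of the surjectivity hypothesis as the statement $(\star)$ are both correct and match the paper. You also isolate the right mechanism, namely the commutator identity $X\big((\id+iH)p\big)=-iX_\perp p_0-i(X_\perp p)_0$ for a first integral $p$. But from that point on you make the problem much harder than it is, and step~(ii) as written is not a proof: the iteration you sketch is not set up, the claimed smoothing gain is not established, and the appeal to $I^*I$ being a $\Psi$DO of order $-1$ is irrelevant here. You yourself flag this as ``the main obstacle,'' and it is a genuine gap.

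The paper avoids the obstacle entirely with two simple observations you missed. First, since a nontrapping surface with strictly convex boundary is simply connected, one has a global Hodge decomposition $A=da+\star db=Xa+X_\perp b$; subtracting the holomorphic function $a\in H_0$ reduces the problem to $A=X_\perp b$. This replaces your step~(i). Second, in the commutator identity, if one takes $p$ to be \emph{even} then $X_\perp p$ is odd and the error term $(X_\perp p)_0$ vanishes outright, leaving $X\big((\id+iH)p\big)=-iX_\perp p_0$. So one only needs a single application of $(\star)$: choose an even first integral $\hat w$ with $\hat w_0=-ib$ (take the even part of the $p$ supplied by $(\star)$), and then $w=a+(\id+iH)\hat w$ is holomorphic with $Xw=-A$. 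No iteration, no Fredholm argument, no regularity analysis of a cascade. Your entire step~(ii) collapses to one line once you restrict to even first integrals; the parity trick is the idea your proposal is missing.
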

\begin{proof}
We only do the holomorphic case (the other case is analogous), and prove that there is a holomorphic $w \in C^{\infty}(SM)$ with $Xw = -A$. First note that the nontrapping condition implies that $(M,g)$ is simply connected.
This follows from a well known fact in Riemannian geometry: a compact connected and non-contractible Riemannian manifold with strictly convex boundary must have a closed geodesic in its interior \cite{T}.
In our case this violates the assumption of being nontrapping unless $M$ is contractible and hence simply connected.

Since $M$ is simply connected, there is a Hodge decomposition $A_j \,dx^j = da + \star db$ for some $a, b \in C^{\infty}(M)$ ($\star$ is the Hodge star operator) and we have $A = Xa + X_{\perp} b$. Replacing $w$ by $w-a$, it is enough to consider the case where $A = X_{\perp} b$.

Let us try a solution of the form $w = (\id + iH) \hat{w}$ where $\hat{w} \in C^{\infty}(SM)$ is even. By Proposition \ref{prop:hxcommutator}, 
$$
Xw = (\id + iH) X \hat{w} - i[H,X] \hat{w} = (\id + iH) X \hat{w} - i X_{\perp} \hat{w}_0.
$$
Now it is sufficient to find $\hat{w}$ even with $X \hat{w} = 0$ and $\hat{w}_0 = -ib$. Using the surjectivity of $I_0^*$, there is some $h \in C^{\infty}_{\alpha}(\partial_+(SM))$ with $I_0^* h = -2\pi ib$. But if $w' \in C^{\infty}(SM)$ is the function with $X w' = 0$ in $SM$ and $w'|_{\partial_+(SM)} = h$, we have $(w')_0 = \frac{1}{2\pi} I_0^* h = -ib$. It is enough to take $\hat{w} = w'_+$.
\end{proof}

We will also make use of a regularity result from \cite[Proposition 5.2]{PSU}.

\begin{Proposition} \label{prop_transport_regularity}
Let $(M,g)$ be a compact nontrapping manifold with strictly convex boundary, let $f \in C^{\infty}(SM)$, and define  
$$
u(x,v) := \int_0^{\tau(x,v)} f(\varphi_t(x,v)) \,dt, \quad (x,v) \in SM.
$$
If $If = 0$, then $u \in C^{\infty}(SM)$.
\end{Proposition}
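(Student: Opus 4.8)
Observe first that $u$ solves the transport equation $Xu=-f$ in $SM$ and vanishes on all of $\partial(SM)$: on the part where $\langle v,\nu\rangle\geq 0$ this is because $\tau(x,v)=0$ there, while on $\partial_+(SM)$ it is exactly the hypothesis $If=0$. The plan is to produce an explicit formula for $u$ which is manifestly smooth. Because $(M,g)$ has strictly convex boundary, any geodesic segment containing an interior point of $M$ meets $\partial M$ transversally at each of its endpoints (otherwise strict convexity would force the geodesic to lie outside $M$ immediately before the endpoint, contradicting that it lies in $M$ along the segment). Hence, by the implicit function theorem, the exit time $\tau$ is smooth on $SM\setminus\Gamma_0$, where $\Gamma_0:=\{(x,v)\in SM\,;\,x\in\partial M,\ \langle v,\nu(x)\rangle=0\}$ is the glancing set, and there $u$ is smooth by differentiation under the integral sign. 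So the entire difficulty, and the only place where $If=0$ is genuinely used, is the behaviour of $u$ near $\Gamma_0$, where $\tau$ is singular (square-root type). As a preliminary I would extend $g$ and $f$ smoothly to a slightly larger manifold $N$ with $M\Subset N^{\circ}$, obtaining $\bar f\in C^{\infty}(SN)$; this serves only to make the geodesic flow and $\bar f$ available for times slightly beyond the $M$-exit times.

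Set $\tau_{\pm}(x,v):=\tau(x,\pm v)$ and $c:=\tfrac12(\tau_+-\tau_-)$, $\ell:=\tfrac12(\tau_++\tau_-)$, so that the geodesic through $(x,v)$ lies in $M$ precisely for $t\in[-\tau_-,\tau_+]=[c-\ell,c+\ell]$ and $\ell^2=c^2+\tau_+\tau_-$. The crucial claim is that $c$ and $\ell^2$ (but not $\ell$ itself) extend to smooth functions on $SM$. Away from $\Gamma_0$ this is clear, since $\tau_{\pm}$ are themselves smooth there. Near a point $(x_0,v_0)\in\Gamma_0$ I would argue as follows: fix a boundary defining function $r$ for $M$ ($r>0$ in the interior, $r=0$ on $\partial M$) and put $\rho_{x,v}(t):=r(\gamma(t,x,v))$. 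At $(x_0,v_0)$ one has $\rho_{x_0,v_0}(0)=\dot\rho_{x_0,v_0}(0)=0$ and, by strict convexity, $\ddot\rho_{x_0,v_0}(0)<0$; hence, near $t=0$ and $(x,v)$ near $(x_0,v_0)$, the smooth function $\rho_{x,v}(t)$ is the product of a nonvanishing smooth factor and a monic quadratic in $t$ by the $C^{\infty}$ Weierstrass preparation theorem, and the two roots of that quadratic are $\tau_+(x,v)$ and $-\tau_-(x,v)$. Thus the coefficients of the quadratic, namely $\tau_+-\tau_-$ and $\tau_+\tau_-$, are smooth in $(x,v)$, so $c$ and $\ell^2=c^2+\tau_+\tau_-$ extend smoothly across $\Gamma_0$. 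This local analysis is the step I expect to be the main obstacle.

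Granting the claim, I would conclude as follows. For $(x,v)\in SM$,
\[
u(x,v)=\int_0^{c(x,v)}\bar f(\varphi_t(x,v))\,dt+\int_0^{\ell(x,v)}\bar f(\varphi_{c(x,v)+s}(x,v))\,ds,
\]
which makes sense because $0$, $c$ and $c\pm s$ (for $|s|\leq\ell$) all lie in $[-\tau_-,\tau_+]$, so the geodesic arcs in question stay in $M$. The hypothesis $If=0$ says exactly that $\int_{-\tau_-}^{\tau_+}f(\varphi_t(x,v))\,dt=0$, i.e. $\int_{-\ell}^{\ell}\bar f(\varphi_{c+s}(x,v))\,ds=0$; therefore in the second integral $\bar f(\varphi_{c+s}(x,v))$ may be replaced by its odd part in $s$,
\[
G(s,x,v):=\tfrac12\big(\bar f(\varphi_{c(x,v)+s}(x,v))-\bar f(\varphi_{c(x,v)-s}(x,v))\big).
\]
The first integral is a smooth function of $(x,v)$ since $c$ is smooth and the integrand depends smoothly on $(t,x,v)$. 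For the second, $G$ is smooth and odd in $s$ on a set $(-\varepsilon,\varepsilon)\times U$ with $\sup_U\ell<\varepsilon$, so $s\mapsto\int_0^sG(r,\cdot)\,dr$ is smooth and even in $s$; being even and smooth it is of the form $\Theta(s^2,\cdot)$ for a smooth $\Theta$, and evaluating at $s=\ell(x,v)$ gives $\Theta(\ell(x,v)^2,x,v)$, which is smooth because $\ell^2$ is. Adding the two pieces shows $u\in C^{\infty}(SM)$.
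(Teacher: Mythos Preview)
The paper does not actually prove this proposition; it merely quotes it from \cite[Proposition~5.2]{PSU}. So there is no in-paper argument to compare yours against, and your proposal supplies what the paper omits.

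Your proof is correct. The strategy---isolating the glancing set $\Gamma_0$ as the only obstruction, using Malgrange--Weierstrass preparation on the boundary-defining function $\rho_{x,v}(t)$ to show that the symmetric functions $c=\tfrac12(\tau_+-\tau_-)$ and $\tau_+\tau_-$ (hence $\ell^2$) are smooth across $\Gamma_0$, and then splitting $u$ into a manifestly smooth piece plus $\int_0^{\ell}G(s,\cdot)\,ds$ with $G$ odd in $s$---is precisely the classical route (it is essentially Sharafutdinov's argument, \cite[Lemma~4.1.1]{Sh}, adapted to $SM$). The hypothesis $If=0$ enters exactly where you say: it kills the even-in-$s$ part of the integrand, so the remaining integral is an even smooth function of $\ell$, hence a smooth function of $\ell^2$ by Whitney's theorem on even functions. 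Two small points worth tightening: (i) the step ``smooth and even in $s$ implies of the form $\Theta(s^2,\cdot)$ with $\Theta$ smooth'' is the parametrized Whitney theorem and deserves an explicit citation; (ii) your decomposition of $u$ is only needed in a neighbourhood of $\Gamma_0$ (where $\ell$ is small enough that $G$ extends smoothly to $(-\varepsilon,\varepsilon)$ via the enlargement $N$), since away from $\Gamma_0$ you already have smoothness of $\tau$ directly---you say this, but the transition between the global formula and the local argument could be made more explicit.
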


We conclude this section by explaining
the identification between real-valued symmetric $m$-tensor fields
and certain smooth functions on $SM$.
Given such a tensor $f = f_{i_1 \cdots i_m} \,dx^{i_1} \otimes \cdots \otimes \,dx^{i_m}$ we consider the corresponding function on $SM$ (henceforth referred to as the restriction) defined by 
\[f(x,v) = f_{i_1 \cdots i_m} v^{i_1} \cdots v^{i_m}.\]
Then clearly
\[f=\sum_{k=-m}^{m}f_k\]
where $\bar{f}_{k}=f_{-k}$. Moreover if $m$ is even (resp. odd) all
the odd (resp. even) Fourier coefficients vanish.

Conversely suppose that we are given a smooth real-valued function
$f\in C^{\infty}(SM)$ such that $f_{k}=0$ for $|k|\geq m+1$.
Suppose in addition that if $m$ is even (resp. odd)
then $f=f_{+}$ (resp. $f=f_{-}$). Since $f$ is real-valued $\bar{f}_{k}=f_{-k}$. For each $k\geq 1$, the function $f_{-k}+f_{k}$ gives rise
to a unique real-valued symmetric $k$-tensor $F_k$ whose restriction
to $SM$ is precisely $f_{-k}+f_{k}$. This can be seen as follows: recall that a smooth
element $f_{k}$ can be identified with a section of $T^*M^{\otimes k}$ hence, its real
part defines a symmetric $k$-tensor. (For $k=0$, $\bar{f}_{0}=f_{0}$ is obviously a real-valued $0$-tensor.) More explicitly, in the coordinates $(x,\theta)$, given $f_{k}=\tilde{f}_{k}e^{ik\theta}$ we define
\[F_{k}:=2\Re(\tilde{f}_{k}e^{k\lambda}(dz)^k).\]
It is straightforward to check that these local expressions glue together to give a real-valued symmetric $k$-tensor whose restriction to $SM$ is $f_{-k}+f_{k}$.

By tensoring with the metric tensor $g$ and symmetrizing it is possible
to raise the degree of a symmetric tensor by two. Hence if $\sigma$ denotes symmetrization, $\alpha F_{k}:=\sigma(F_k\otimes g)$
will be a symmetric tensor of degree $k+2$ whose restriction to
$SM$ is again $f_{k}+f_{-k}$ since $g$ restricts as the constant function $1$ 
to $SM$. Now consider the symmetric $m$-tensor
\[F:=\sum_{i=0}^{[m/2]}\alpha^{i}F_{m-2i}.\]
It is easy to check that the restriction of $F$ to $SM$ is precisely $f$.

\section{Pestov identity}
\label{sec:pestov}

In this section we consider the Pestov identity, which is the basic energy identity that has been used since the work of Mukhometov \cite{Mu} in most injectivity proofs of ray transforms in the absence of real-analyticity or special symmetries. The Pestov identity often appears in a somewhat ad hoc way, but here we give a new point of view which makes its derivation more transparent.

The easiest way to motivate the Pestov identity is to consider the injectivity of the ray transform on functions. The first step is to recast the injectivity problem as a uniqueness question for the partial differential operator $P$ on $SM$ where 
$$
P := VX.
$$
This involves a standard reduction to the transport equation.

\begin{Proposition}
Let $(M,g)$ be a compact oriented nontrapping surface with strictly convex smooth boundary. The following statements are equivalent.
\begin{enumerate}
\item[(a)]
The ray transform $I: C^{\infty}(M) \to C(\partial_+(SM))$ is injective.
\item[(b)]
Any smooth solution of $Pu = 0$ in $SM$ with $u|_{\partial(SM)} = 0$  is identically zero.
\end{enumerate}
\end{Proposition}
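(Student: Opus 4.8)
The plan is to prove the equivalence by passing through the transport equation in the standard way, using Proposition \ref{prop_transport_regularity} for the regularity statement that makes the argument work smoothly.

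First I would show (a) $\Rightarrow$ (b). Suppose $u \in C^{\infty}(SM)$ solves $Pu = VXu = 0$ with $u|_{\partial(SM)} = 0$. Since $-iV$ has integer spectrum, $Vw = 0$ for $w \in C^{\infty}(SM)$ forces $w$ to be constant on fibers, i.e. $w = w(x)$ is the pullback of a function on $M$. Applying this to $w = Xu$ shows $Xu = -f$ for some $f \in C^{\infty}(M)$ (viewed as a function on $SM$ independent of $v$; the sign is chosen for convenience). Because $u|_{\partial(SM)} = 0$ and $(M,g)$ is nontrapping, integrating along geodesics gives $u(x,v) = \int_0^{\tau(x,v)} f(\varphi_t(x,v))\,dt$ for all $(x,v) \in SM$; in particular, restricting to $\partial_+(SM)$ yields $If = u|_{\partial_+(SM)}$. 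Here one must be slightly careful: a priori $u$ is known to vanish on all of $\partial(SM)$, and in particular on $\partial_+(SM)$, so $If = 0$. By (a), $f = 0$, hence $Xu = 0$, and since $u|_{\partial(SM)} = 0$ the only solution is $u \equiv 0$ (again by integrating along geodesics, or by the energy identity $(Xu, u) = 0$ combined with $u = 0$ on the boundary and unique solvability of the transport equation backward from $\partial_-(SM)$).

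For (b) $\Rightarrow$ (a), suppose $f \in C^{\infty}(M)$ satisfies $If = 0$. Define $u(x,v) = \int_0^{\tau(x,v)} f(\varphi_t(x,v))\,dt$ on $SM$. By Proposition \ref{prop_transport_regularity}, the hypothesis $If = 0$ guarantees $u \in C^{\infty}(SM)$ — this is the technical point that needs the regularity result, since otherwise $u$ would only be known to be smooth in the interior. By the fundamental theorem of calculus along the flow, $Xu = -f$ in $SM$, and $u|_{\partial_+(SM)} = If = 0$. Moreover $u$ vanishes on $\partial_-(SM)$ as well: for $(x,v) \in \partial_-(SM)$ one has $\tau(x,v) = 0$, so $u(x,v) = 0$ trivially; hence $u|_{\partial(SM)} = 0$. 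Then $Pu = VXu = V(-f) = 0$ since $f = f(x)$ does not depend on $v$. By (b), $u \equiv 0$, so $f = -Xu = 0$, proving injectivity.

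The only genuinely delicate point is the regularity of $u$ up to the boundary in the direction (b) $\Rightarrow$ (a): near $\partial_+(SM)$ the function $\tau(x,v)$ is smooth, but near glancing directions (where $\langle v, \nu \rangle = 0$) $\tau$ is only Lipschitz and $u$ need not be smooth for general $f$ — it is precisely the vanishing of $If$ that forces the singularities to cancel. This is exactly the content of Proposition \ref{prop_transport_regularity}, so I would simply invoke it. Everything else is the routine dictionary between the ray transform and the transport equation, together with the elementary fact that $\ker V$ consists of functions pulled back from $M$.
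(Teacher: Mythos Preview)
Your proof is correct and follows essentially the same route as the paper: both directions go through the transport equation $Xu=-f$, with Proposition~\ref{prop_transport_regularity} supplying the needed boundary regularity of $u$ in the direction (b) $\Rightarrow$ (a). Your write-up is slightly more explicit (deriving the integral representation of $u$ in (a) $\Rightarrow$ (b) rather than just reading off $If = u|_{\partial_+(SM)}=0$ directly), but the argument is the same.
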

\begin{proof}
Assume that the ray transform is injective, and let $u \in C^{\infty}(SM)$   solve $Pu = 0$   in $SM$   with $u|_{\partial(SM)} = 0$. This implies that $Xu = -f$   in $SM$ for some smooth $f$   only depending on $x$, and we have $0 = u|_{\partial_+(SM)} = If$. Since $I$   is injective one has $f = 0$   and thus $Xu = 0$, which implies $u = 0$   by the boundary condition.

Conversely, assume that the only smooth solution of $Pu = 0$   in $SM$ which vanishes on $\partial(SM)$ is zero. Let  $f \in C^{\infty}(M)$ be a function with $If = 0$, and define the function 
$$
u(x,v) := \int_0^{\tau(x,v)} f(\gamma(t,x,v)) \,dt, \quad (x,v) \in SM.
$$
This function satisfies the transport equation $Xu = -f$ in $SM$ and $u|_{\partial(SM)} = 0$ since $If = 0$, and also $u \in C^{\infty}(SM)$ by Proposition \ref{prop_transport_regularity}. Since $f$ only depends on $x$ we have $Vf = 0$, and consequently $Pu = 0$ in $SM$ and $u|_{\partial(SM)} = 0$. It follows that $u = 0$ and also $f = -Xu = 0$.
\end{proof}

We now focus on proving a uniqueness statement for solutions of $Pu = 0$   in $SM$. For this it is convenient to express $P$ in terms of its self-adjoint and skew-adjoint parts in the $L^2(SM)$ inner product as 
$$
P = A + iB, \quad A := \frac{P+P^*}{2},  \   \   B := \frac{P-P^*}{2i}.
$$
Here the formal adjoint $P^*$    of $P$   is given by 
$$
P^* := XV.
$$
In fact, if $u \in C^{\infty}(SM)$   with $u|_{\partial(SM)} = 0$, then 
\begin{align}
\norm{Pu}^2 &= ((A+iB)u, (A+iB)u) = \norm{Au}^2 + \norm{Bu}^2 + i(Bu,Au) - i(Au,Bu)  \label{p_ab_computation} \\
 &= \norm{Au}^2 + \norm{Bu}^2 + (i[A,B]u, u). \notag
\end{align}
This computation suggests to study the commutator $i[A,B]$. We note that the argument just presented is typical in the proof of $L^2$ Carleman estimates \cite{H}.

By the definition of $A$   and $B$ it easily follows that $i[A,B] = \frac{1}{2} [P^*, P]$. By the commutation formulas for $X$, $X_{\perp}$ and $V$, this commutator may be expressed as 
\begin{align*}
[P^*, P] &= XVVX - VXXV = VXVX + X_{\perp} VX - VXVX - VX X_{\perp} \\
 &= V[X_{\perp}, X] - X^2 = -X^2 + VKV.
\end{align*}
Consequently 
$$
([P^*, P]u, u) = \norm{Xu}^2 - (KVu,Vu).
$$
If the curvature $K$ is nonpositive, then $[P^*,P]$ is positive semidefinite. More generally, one can try to use the other positive terms in \eqref{p_ab_computation}. Note that 
$$
 \norm{Au}^2 + \norm{Bu}^2 = \frac{1}{2}(\norm{Pu}^2 + \norm{P^* u}^2).
$$
The identity \eqref{p_ab_computation} may then be expressed as 
\begin{align*}
\norm{Pu}^2 &= \norm{P^* u}^2 + ([P^*,P]u, u).
\end{align*}
Moving the term $\norm{Pu}^2$ to the other side, we have proved the version of the Pestov identity which is most suited for our purposes. The main point in this proof was that the Pestov identity boils down to a standard $L^2$ estimate based on separating the self-adjoint and skew-adjoint parts of $P$ and on computing one commutator, $[P^*, P]$.

\begin{Proposition} \label{prop_pestov_standard}
If $(M,g)$ is a compact oriented surface with smooth boundary, then 
$$
\norm{XVu}^2 - (KVu,Vu) + \norm{Xu}^2 - \norm{VXu}^2 = 0
$$
for any $u \in C^{\infty}(SM)$ with $u|_{\partial(SM)} = 0$.
\end{Proposition}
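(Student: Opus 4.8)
The plan is to derive the identity in Proposition \ref{prop_pestov_standard} directly from the computations already assembled in this section, essentially by reading them backwards. The key observation is that equation \eqref{p_ab_computation}, applied to $u \in C^\infty(SM)$ with $u|_{\partial(SM)} = 0$, together with the identity $\norm{Au}^2 + \norm{Bu}^2 = \frac{1}{2}(\norm{Pu}^2 + \norm{P^*u}^2)$, immediately gives $\norm{Pu}^2 = \norm{P^*u}^2 + ([P^*,P]u,u)$. So the entire content reduces to (i) justifying that the adjoint of $P = VX$ is $P^* = XV$ on functions vanishing on $\partial(SM)$, (ii) computing the commutator $[P^*,P] = -X^2 + VKV$, and (iii) substituting the explicit form $([P^*,P]u,u) = \norm{Xu}^2 - (KVu, Vu)$, then rewriting $\norm{Pu}^2 = \norm{VXu}^2$ and $\norm{P^*u}^2 = \norm{XVu}^2$ and moving all terms to one side.

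First I would verify the adjoint formula. Since $X, X_\perp, V$ are volume-preserving for $d\Sigma^3$, we have $(Vu,v) = -(u,Vv)$ for all smooth $u,v$, and $(Xu,v) = -(u,Xv)$ provided $u$ or $v$ vanishes on $\partial(SM)$; these are exactly the integration-by-parts identities recalled in Section \ref{sec:prelim}. Hence for $u$ with $u|_{\partial(SM)} = 0$ and arbitrary smooth $v$, one gets $(VXu, v) = -(Xu, Vv) = (u, XVv)$ after checking that the boundary term in the $X$-integration by parts vanishes because $u|_{\partial(SM)} = 0$ (the factor being differentiated by $X$ is $u$, so its restriction to the boundary is what enters). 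This gives $P^* = XV$ as a formal adjoint in the relevant sense. I should be a little careful here: in $(XVu,v) = -(Vu,Xv)$ the boundary term involves $Vu$ restricted to $\partial(SM)$, which need not vanish, so the operator $P^* = XV$ is genuinely only the adjoint of $P$ when tested against functions vanishing on the boundary — but that is precisely the setting of \eqref{p_ab_computation}, so this is harmless.

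Next I would carry out the commutator computation. Using the structure equations $X_\perp = [X,V]$ (equivalently $[V,X] = X_\perp$), $X = [V,X_\perp]$, and $[X,X_\perp] = -KV$, expand
\[
[P^*,P] = XVVX - VXXV.
\]
Write $VX = XV + [V,X] = XV + X_\perp$, so $XVVX = XV(XV + X_\perp)V$-type manipulations; more cleanly, $XVVX = XV\cdot VX$ and $VXXV = VX\cdot XV$, and one substitutes $VX = XV + X_\perp$ into the first and $XV = VX - X_\perp$ into... actually the cleanest route is the one indicated in the excerpt: $XVVX - VXXV = VXVX + X_\perp VX - VXVX - VXX_\perp = V[X_\perp,X] - X^2 \cdot(\text{correction})$, i.e. one obtains $V(X_\perp X - X X_\perp) + (\text{terms})$ and uses $[X,X_\perp] = -KV$ to get $V[X_\perp,X] = VKV$, leaving $-X^2$. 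So $[P^*,P] = -X^2 + VKV$. Then $([P^*,P]u,u) = -(X^2u,u) + (VKVu,u) = \norm{Xu}^2 - (KVu,Vu)$, the first equality again by integration by parts using $u|_{\partial(SM)} = 0$ (for the $X^2$ term) and the skew-adjointness of $V$ (for the $VKV$ term, noting $K$ depends only on $x$). Substituting into $\norm{Pu}^2 = \norm{P^*u}^2 + ([P^*,P]u,u)$ and recalling $Pu = VXu$, $P^*u = XVu$ yields $\norm{VXu}^2 = \norm{XVu}^2 + \norm{Xu}^2 - (KVu,Vu)$, which rearranges to the stated identity.

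I do not expect any serious obstacle: every ingredient is already in place. The only point requiring mild care is the bookkeeping of boundary terms in the integration-by-parts steps — ensuring that in each application of $(Xa,b) = -(a,Xb)$ the function actually restricted to $\partial(SM)$ is $u$ (or a product containing the vanishing factor $u$) rather than $Vu$ or $Xu$, which have no reason to vanish there. Concretely, in $\norm{VXu}^2 = (VXu, VXu) = -(XVXu, VXu) = \dots$ one must route the computation through the abstract identity \eqref{p_ab_computation} rather than ad hoc manipulations, since \eqref{p_ab_computation} was derived precisely for $u$ with $u|_{\partial(SM)} = 0$ and already absorbs all the boundary contributions correctly. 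With that caveat the proof is a short assembly of the section's preceding computations.
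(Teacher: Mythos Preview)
Your proposal is correct and follows essentially the same route as the paper: the proposition is precisely the rearrangement of $\norm{Pu}^2 = \norm{P^*u}^2 + ([P^*,P]u,u)$ obtained from \eqref{p_ab_computation} together with the commutator computation $[P^*,P] = -X^2 + VKV$. One small remark on your boundary-term worry: since $V$ is tangent to the fibers and $\partial(SM)$ is a union of fibers over $\partial M$, the condition $u|_{\partial(SM)}=0$ automatically forces $Vu|_{\partial(SM)}=0$, so the integration by parts $(XVu,v) = -(Vu,Xv)$ is in fact unproblematic.
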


It is well known (cf. proof of \cite[Proposition 7.2]{DKSU}) that on a simple surface, one has 
$$
\norm{XVu}^2 - (KVu,Vu) \geq 0, \quad u \in C^{\infty}(SM), \ u|_{\partial(SM)} = 0.
$$
Also, if $Xu = -f$ where $f = f_0 + f_1 + f_{-1}$ is the sum of a $0$-form and $1$-form, we have 
$$
\norm{Xu}^2 - \norm{VXu}^2 = \norm{f_0}^2 \geq 0.
$$
These two facts together with the Pestov identity give the standard proof of $s$-injectivity of the ray transform for $0$-forms and $1$-forms on simple surfaces. It is easy to see where this proof breaks down if $m \geq 2$: the Fourier expansion $f = \sum_{k=-m}^m f_k$ implies  
$$
\norm{Xu}^2 - \norm{VXu}^2 = \norm{f_0}^2 - \sum_{2 \leq \abs{k} \leq m} (k^2-1) \norm{f_k}^2.
$$
This term may be negative, and the Pestov identity may not give useful information unless there is some extra positivity like a curvature bound.

Finally, we consider the Pestov identity in the presence of attenuation given by $A(x,v) = A_j(x) v^j$ where $A_j \,dx^j$ is a purely imaginary $1$-form on $M$. We write $A$   both for the $1$-form and the function on $SM$. The geometric interpretation is that $d+A$ is a unitary connection on the trivial bundle $M \times \mC$, and its curvature is the $2$-form 
$$
F_A := dA + A \wedge A=dA.
$$
Then $\star F_A$ is a function on $M$ where $\star$ is the Hodge star. We consider the operator 
$$
P := V(X+A).
$$
Since $\bar{A} = -A$, the formal adjoint of $P$   in the $L^2(SM)$   inner product is 
$$
P^* = (X+A)V.
$$
The same argument leading to Proposition \ref{prop_pestov_standard}, based on computing the commutator $[P^*, P]$, gives the following Pestov identity proved also in \cite[Lemma 6.1]{PSU}.

\begin{Proposition} \label{prop_pestov_attenuation}
If $(M,g)$ is a compact oriented surface with smooth boundary and if $A$ is a purely imaginary $1$-form on $M$, then 
$$
\norm{(X+A)Vu}^2 - (KVu,Vu) + \norm{(X+A)u}^2 - \norm{V(X+A)u}^2 + (\star F_A Vu,u) = 0
$$
for any $u \in C^{\infty}(SM)$ with $u|_{\partial(SM)} = 0$.
\end{Proposition}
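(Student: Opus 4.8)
The plan is to mimic the derivation of Proposition \ref{prop_pestov_standard}, replacing $X$ throughout by the attenuated vector field $X+A$. Set $P := V(X+A)$ and compute its formal adjoint with respect to the $L^2(SM)$ inner product. Since $V$ is skew-adjoint on functions vanishing on $\partial(SM)$ (and in fact on all smooth functions) and $X+A$ is skew-adjoint on functions vanishing on the boundary because $\overline{A} = -A$ makes the zeroth-order term skew, one gets $P^* = (X+A)V$ exactly as claimed. Then the same algebraic manipulation as before yields the identity
\[
\norm{Pu}^2 = \norm{P^*u}^2 + ([P^*,P]u,u), \quad u \in C^{\infty}(SM), \ u|_{\partial(SM)} = 0,
\]
so the entire content of the proposition is the evaluation of the commutator $[P^*,P] = (X+A)VV(X+A) - V(X+A)(X+A)V$.

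Next I would expand this commutator using the structure equations. Writing $Y := X+A$ for brevity, $[Y V, V Y] = Y V V Y - V Y Y V$. Insert $VY = YV + [V,Y]$ into the first term and $YV = VY + [Y,V]$ into the second, just as in the unattenuated case, to reduce everything to $V[\text{something},Y]$ minus $Y^2$ plus curvature-type terms. The new feature is that $[V, X+A] = [V,X] + [V,A] = X_\perp + V(A)$, where $V(A)$ is the derivative of the function $A$ on $SM$ along the vertical field; and similarly $[X+A, X_\perp]$ will pick up $[A, X_\perp] = -X_\perp(A)$. Tracking these extra first-order terms, the commutator computation $[P^*,P] = -Y^2 + V K V + (\text{terms involving derivatives of } A)$ should collect exactly into $-Y^2 + VKV + V(\star F_A \, \cdot\,)$ or an equivalent arrangement, since $\star F_A = \star dA$ is precisely the curvature scalar of the connection $d+A$ and is the natural invariant combination of $X_\perp(A)$-type and $V(A)$-type terms that survives. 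Pairing with $u$ and using $(Y^2 u, u) = -\norm{Yu}^2$, $(VKVu,u) = -(KVu,Vu)$, and the contribution $(\star F_A Vu, u)$ gives $([P^*,P]u,u) = \norm{Yu}^2 - (KVu,Vu) + (\star F_A Vu, u)$. Substituting into the identity above and rearranging $\norm{Pu}^2 = \norm{YVu}^2$, $\norm{P^*u}^2 = \norm{VYu}^2$ produces exactly the stated Pestov identity.

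The main obstacle is the bookkeeping in the commutator: one must be careful that the extra terms generated by $A$ — both from $[V,A]$ inside the rearrangements and from cross terms like $XA + AX$ appearing in $Y^2$ versus what one wants — genuinely assemble into the single invariant scalar $\star F_A = \star dA$, with the correct sign and no leftover lower-order pieces. This is the step where it is worth verifying the identity $X_\perp(A) - (\text{appropriate } V \text{-derivative terms}) = -\star F_A$ in isothermal coordinates, using the local formulas for $X$ and $X_\perp$ recorded in Section \ref{sec:prelim}, or alternatively by observing that the attenuated transport operator transforms covariantly under the gauge change $A \mapsto A + d\phi$ (which replaces $u$ by $e^{-\phi} u$ and leaves $F_A$ unchanged), so the only term in $[P^*,P]$ that can depend on $A$ through more than its curvature must in fact be expressible via $F_A$ alone. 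Once this identification is in hand the rest is the routine repetition of the argument for Proposition \ref{prop_pestov_standard}; since \cite[Lemma 6.1]{PSU} already records this identity, the proof can legitimately be kept short.
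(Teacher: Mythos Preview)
Your proposal is correct and follows exactly the approach the paper indicates: the paper's own ``proof'' is the single sentence that the argument leading to Proposition~\ref{prop_pestov_standard} carries over verbatim with $X$ replaced by $X+A$, the commutator $[P^*,P]$ now producing the extra curvature term $\star F_A$, and refers to \cite[Lemma~6.1]{PSU} for the bookkeeping you describe. Note only a harmless slip in your last sentence: since $P=V(X+A)$ and $P^*=(X+A)V$, one has $\norm{Pu}^2=\norm{V(X+A)u}^2$ and $\norm{P^*u}^2=\norm{(X+A)Vu}^2$, the reverse of what you wrote; with this correction the substitution yields the stated identity.
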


Using the Fourier expansion of $u$, the last term in the identity is given by 
$$
\sum_{k=-\infty}^{\infty} ik(\star F_A u_k, u_k)
$$
This shows that if $u$   is holomorphic and $i\star F_A > 0$, or if $u$ is antiholomorphic and $i\star F_A < 0$, one gains an additional positive term in the Pestov identity. This will be crucial in absorbing negative contributions from the term $\norm{(X+A)u}^2 - \norm{V(X+A)u}^2$ when proving $s$-injectivity on tensor fields.

\section{First proof}
\label{sec:firstproof}

The proof of $s$-injectivity of the ray transform reduces to proving the next result. We say that $f \in C^{\infty}(SM)$ has degree $m$ if $f_k = 0$ for $\abs{k} \geq m+1$.

\begin{Proposition} \label{proposition_finitedegree}
Let $(M,g)$ be a simple surface, and assume that $u \in C^{\infty}(SM)$ satisfies $Xu = -f$ in $SM$ with $u|_{\partial(SM)} = 0$. If $f \in C^{\infty}(SM)$ has degree $m \geq 1$, then $u$ has degree $m-1$. If $f$ has degree $0$, then $u = 0$.
\end{Proposition}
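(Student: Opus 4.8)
The plan is to argue by downward induction on the top degree, peeling off the highest-order Fourier modes of $f$ one at a time using the attenuated Pestov identity from Proposition~\ref{prop_pestov_attenuation} together with holomorphic integrating factors from Proposition~\ref{thm_holomorphic_integrating_factors}. The key observation is that the obstruction term $\norm{Xu}^2 - \norm{VXu}^2 = -\sum_{k}(k^2-1)\norm{u_k}^2$ (writing things in terms of $u$ rather than $f$, since $Xu=-f$) is negative precisely on the high modes, and introducing a suitably signed curvature term $\star F_A$ via an attenuation can be made to dominate it once we have forced $u$ to be holomorphic (or antiholomorphic).

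First I would reduce to the case where $f$ is either holomorphic or antiholomorphic of a definite degree: since $Xu = -f$ and $u$ has some a priori finite degree (from regularity, Proposition~\ref{prop_transport_regularity}, plus the finite-degree hypothesis on $f$), split $u = u_+ + u_-$ and work with each parity separately, so that $f$ has only, say, modes $f_{-m},\dots,f_m$ of one parity. Suppose $u$ has degree $N \geq m$; I want to show $u_{\pm N} = 0$ when $N \geq m$, i.e. that $u$ actually has degree $m-1$. Here is the core step. Choose a purely imaginary $1$-form $A$ on $M$ with $i\star F_A > 0$; concretely, $A = i\beta$ where $d\beta$ is a positive multiple of the area form (a primitive of the area form, as the authors say they use in this section) — this exists since $M$ is a disc. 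Apply the holomorphic integrating factor: there is a holomorphic $w$ with $Xw = -A$, so $e^{w}$ is an integrating factor and $\tilde u := e^{w} u$ satisfies $(X+A)$-type equations; more precisely one arranges that the high modes of $\tilde u$ satisfy a transport equation with attenuation $A$ and a right-hand side that is still of controlled degree. Now feed the top modes into Proposition~\ref{prop_pestov_attenuation}: the curvature term contributes $\sum_k k (i\star F_A \,\tilde u_k, \tilde u_k) \gtrsim \sum_k k \norm{\tilde u_k}^2$, which on a holomorphic function is $\geq 0$ and in fact controls the high modes, while the term $\norm{(X+A)\tilde u}^2 - \norm{V(X+A)\tilde u}^2$ produces the competing $-\sum (k^2-1)\norm{\tilde u_k}^2$. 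The point is that after multiplying the integrating factor and restricting attention to the truncated problem for modes $\geq m$, the curvature term — being first order in $k$ but with a free positive constant that we can scale up by enlarging $\star F_A$ — wins, together with the nonnegative term $\norm{(X+A)Vu}^2 - (KVu,Vu) \geq 0$ valid on simple surfaces. This forces the top mode to vanish.

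Iterating this argument lowers the degree of $u$ by one at each stage until $u$ has degree $m-1$; and when $f$ has degree $0$, the same scheme (now there is no positive $\norm{f_0}^2$ to obstruct, or rather one runs the clean Pestov identity) forces $u$ to be a function of $x$ only with $Xu=-f$, hence $If=0$ gives $f=0$ and $u=0$ by the boundary condition, or more directly $u$ has degree $-1$, i.e. $u=0$. The main obstacle I anticipate is bookkeeping the effect of the holomorphic integrating factor on the transport equation for the \emph{truncated} (top-mode) part of $u$: multiplying by $e^w$ mixes Fourier modes, so one must check that the source term produced when isolating the modes of degree $\geq m$ does not reintroduce uncontrolled high modes, and that the boundary condition $u|_{\partial(SM)}=0$ is preserved. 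A secondary subtlety is choosing the scaling of $\star F_A$ uniformly so that the first-order-in-$k$ gain genuinely dominates the quadratic-in-$k$ loss on the range of modes in question — this should work because on a holomorphic function supported in modes $\geq m \geq 1$ one can also exploit that the loss $-(k^2-1)$ is itself dominated after an integration-by-parts trick relating $\norm{(X+A)u}$ to lower modes, but getting the constants right is where the real work lies.
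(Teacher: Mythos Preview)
Your overall strategy---use a primitive of the area form as attenuation, conjugate by a holomorphic integrating factor, and apply the attenuated Pestov identity---matches the paper's first proof. However, two load-bearing steps do not work as written.

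First, and most seriously, you assume that $u$ has some a priori finite degree $N$ and then induct downward. This is false: Proposition~\ref{prop_transport_regularity} only gives $u \in C^{\infty}(SM)$, which means the Fourier coefficients decay rapidly, not that all but finitely many vanish. So your induction never starts. The paper avoids this by a different tactic: after conjugating to $\tilde u = e^{sw}u$, it defines the one-sided tail $v := \sum_{k \leq -m-1} \tilde u_k$ and applies Pestov to $v$ directly, killing all the offending modes at once. Because $\tilde f_k = 0$ for $k \leq -m-1$, one gets $(X+A)v = \mu_+ v_{-m-1}$, a single Fourier mode, and this is what makes the computation close.

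Second, your heuristic that ``the curvature term is first order in $k$ with a free constant, while the loss is quadratic in $k$, so scaling wins'' is wrong as stated: a linear-in-$k$ gain cannot beat a quadratic-in-$k$ loss uniformly over infinitely many $k$. What actually happens is subtler. The single bad term $-(m^2-1)\norm{\mu_+ v_{-m-1}}^2$ is telescoped using the commutator formula $[\mu_+,\mu_-] = \tfrac{i}{2}(KV + \star F_A)$ together with the relations $\mu_+ v_{k-1} + \mu_- v_{k+1} = 0$ for $k \leq -m-1$. After the telescoping one finds
\[
\norm{(X+A)v}^2 - \norm{V(X+A)v}^2 = \frac{m^2-1}{2}\,s\sum_k \norm{v_k}^2 - \frac{m^2-1}{2}\sum_k |k|\,(K v_k, v_k),
\]
where crucially the factor $(m^2-1)$ is \emph{fixed} (it does not grow with the summation index) and the remaining sums are only linear in $|k|$. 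Combined with $(\star F_A Vv,v) = s\sum_k |k|\,\norm{v_k}^2$ and the nonnegative term $\norm{(X+A)Vv}^2 - (KVv,Vv) \geq 0$, the choice $s > \tfrac{m^2-1}{2}\sup_M K$ forces $v = 0$. You gesture at ``an integration-by-parts trick relating $\norm{(X+A)u}$ to lower modes,'' and that is indeed this telescoping, but it is the essential mechanism rather than a bookkeeping detail, and it only works because the truncation $v$ makes $(X+A)v$ live in a single mode.
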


\begin{proof}[Proof of Theorem \ref{theorem_sinjectivity}]
Let $f$ be a symmetric $m$-tensor field on $SM$ and suppose that $If = 0$. We write 
$$
u(x,v) := \int_0^{\tau(x,v)} f(\varphi_t(x,v))\,dt, \quad (x,v) \in SM.
$$
Then $u|_{\partial(SM)} = 0$, and also $u \in C^{\infty}(SM)$ by Proposition \ref{prop_transport_regularity}.

Now $f$ has degree $m$, and $u$ satisfies $Xu = -f$ in $SM$ with $u|_{\partial(SM)} = 0$. Proposition \ref{proposition_finitedegree} implies that $u$ has degree $m-1$ (and $u=0$ if $m=0$). We let $h := -u$.
As we explained in Section \ref{sec:prelim}, $h$ gives rise to a symmetric $(m-1)$-tensor still denoted by $h$. Since
$X(h)=f$, this implies that $dh$ and $f$ agree when restricted to $SM$ and since they are both symmetric
tensors of the same degree it follows that $dh=f$. This proves the theorem.
\end{proof}

Proposition \ref{proposition_finitedegree} is in turn an immediate consequence of the next two results. Note that these are generalizations of Proposition \ref{prop:gm}.

\begin{Proposition} \label{proposition_onesidedfourier1}
Let $(M,g)$ be a simple surface, and assume that $u \in C^{\infty}(SM)$ satisfies $Xu = -f$ in $SM$ with $u|_{\partial(SM)} = 0$. If $m \geq 0$ and if $f \in C^{\infty}(SM)$ is such that $f_k = 0$ for $k \leq -m-1$, then $u_k = 0$ for $k \leq -m$.
\end{Proposition}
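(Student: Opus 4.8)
The plan is to argue by induction on $m$: the idea is to isolate the ``very antiholomorphic part'' of $u$ and make it vanish via the attenuated Pestov identity, using as attenuation a primitive of the area form. Recall that $X$ maps $H_k$ into $H_{k-1}\oplus H_{k+1}$, so we may write $X=\eta_-+\eta_+$ with $\eta_\pm\colon H_k\to H_{k\pm1}$, and that $Xu=-f$ with $u|_{\partial(SM)}=0$ forces $If=0$. For $m=0$ the hypothesis is that $f$ is holomorphic, so Proposition \ref{prop:gm}---whose hypotheses hold on a simple surface, since $I_0$ and $I_1$ are then $s$-injective---gives that $u$ is holomorphic with $u_0=0$, i.e.\ $u_k=0$ for all $k\le0$. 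Now let $m\ge1$ and assume the statement for all smaller degrees.

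Set $v:=\sum_{k\le -m}u_k$. Then $v\in C^\infty(SM)$, since each $u_k=\tilde u_k(x)e^{ik\theta}$ is smooth and $v$ differs from the smooth function $\tfrac12\bigl((\id-iH)u-u_0\bigr)$ by the finite sum $\sum_{j=1}^{m-1}u_{-j}$; also $v|_{\partial(SM)}=0$, because the Fourier decomposition in $\theta$ commutes with restriction to $\partial(SM)$. Projecting $Xu=-f$ onto Fourier modes $\le -m$ and using $f_k=0$ for $k\le -m-1$ together with the fact that $X$ shifts Fourier degree by $\pm1$, one obtains
$$
Xv=-\tilde f,\qquad \tilde f_{-m}=f_{-m}+\eta_-u_{-m+1},\quad \tilde f_{-m+1}=-\eta_+u_{-m},\quad \tilde f_k=0\ \text{for all other }k.
$$
Thus $v$ is a smooth function, vanishing on $\partial(SM)$, supported on Fourier modes $\le -m$, and solving a transport equation whose source $\tilde f$ is concentrated in the two adjacent modes $-m$ and $-m+1$; it suffices to show $v=0$.

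To this end I would apply the attenuated Pestov identity. A simple surface is simply connected, so there is a real $1$-form $\alpha$ with $d\alpha$ a prescribed nonzero constant multiple of the area form; put $A:=i\alpha$, a purely imaginary $1$-form whose curvature $\star F_A$ is then a nonzero imaginary constant, the multiple being chosen so that $i\star F_A<0$. By Proposition \ref{thm_holomorphic_integrating_factors} there is an antiholomorphic integrating factor $\tilde w\in C^\infty(SM)$ with $X\tilde w=-A$; then $\phi:=e^{\tilde w}v$ is smooth, still supported on modes $\le -m$, vanishes on $\partial(SM)$, and satisfies $(X+A)\phi=-g$ with $g:=e^{\tilde w}\tilde f$ supported on modes $\le -m+1$. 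Now apply Proposition \ref{prop_pestov_attenuation} to $\phi$. The term $\norm{(X+A)V\phi}^2-(KV\phi,V\phi)$ is $\ge0$ on a simple surface (the attenuated analogue of the known inequality $\norm{XVu}^2-(KVu,Vu)\ge0$; see \cite{PSU}). The curvature term $\sum_k ik(\star F_A\phi_k,\phi_k)$ equals a positive constant times $\sum_k(-k)\norm{\phi_k}^2$, hence is $\ge\delta m\norm{\phi}^2$ for some $\delta>0$, because $\phi$ is supported on modes $k\le -m$ and $i\star F_A<0$. Finally $\norm{(X+A)\phi}^2-\norm{V(X+A)\phi}^2=\norm{g}^2-\norm{Vg}^2=\sum_{k\le -m+1}(1-k^2)\norm{g_k}^2$, which is $\le0$ once $m\ge2$. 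Adding up, the Pestov identity gives $\delta m\norm{\phi}^2\le\sum_{k\le -m+1}(k^2-1)\norm{g_k}^2$.

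The main obstacle is to upgrade this to $\phi=0$. The right-hand side still involves, through $\tilde f$, the unknown Fourier coefficients $u_{-m},u_{-m+1}$ of $u$, and, through the integrating factor, a tail $g_k$ with $k<-m$, so it cannot simply be discarded: this is the crux. The resolution I would pursue is to couple the induction---using the statements already established for smaller degrees, together with the companion proposition controlling the positive Fourier modes (needed to keep track of the exponent)---to conclude that the low modes entering $\tilde f$ vanish, whence the displayed inequality reads $\delta m\norm{\phi}^2\le0$ and so $v=0$, i.e.\ $u_k=0$ for all $k\le -m$, completing the induction. The delicate point throughout is precisely this balancing: it is where simplicity of $(M,g)$ (index-form positivity), the sign of the curvature of the chosen connection (positivity of the curvature term on a function with one-sided Fourier support), and the narrow two-mode support of the projected source must all be used at once.
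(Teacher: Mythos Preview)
Your approach has a genuine gap: the induction you propose cannot close, and without it the Pestov inequality you derive does not suffice.

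First, the inequality. After truncating $u$ and then conjugating by an antiholomorphic integrating factor, the source $g=e^{\tilde w}\tilde f$ acquires an infinite tail on modes $k\le -m+1$; your Pestov identity then yields
\[
s\,m\,\norm{\phi}^2 \ \le\ \sum_{k\le -m+1}(k^2-1)\norm{g_k}^2.
\]
The right-hand side carries weight $k^2$ and involves $(X+A)\phi$, so there is no way to absorb it into the left-hand side by scaling $s$: enlarging $s$ rescales $\tilde w$ and hence $g$ as well. The paper avoids this by reversing your two operations: it first multiplies $u$ by the \emph{holomorphic} factor $e^{sw}$ (preserving the vanishing of low modes of the source), and only then truncates $\tilde u$ at $k\le -m-1$. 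This produces a source with a \emph{single} nonzero Fourier coefficient, $(X+A)v=\mu_+v_{-m-1}$, and no tail. The key step you are missing is then a telescoping identity based on the commutator $[\mu_+,\mu_-]u=\tfrac{i}{2}(KVu+(\star F_A)u)$: it converts $(m^2-1)\norm{\mu_+v_{-m-1}}^2$ into a sum of terms $\tfrac{s}{2}\norm{v_k}^2$ and $\tfrac{|k|}{2}(Kv_k,v_k)$, which are beaten by the curvature term $s\sum|k|\norm{v_k}^2$ once $s>\tfrac{m^2-1}{2}\sup_M K$. No induction is needed.

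Second, the induction itself is circular. The inductive hypothesis for $m'<m$ requires $f_k=0$ for $k\le -m'-1$, which is \emph{stronger} than the assumption $f_k=0$ for $k\le -m-1$ you have; it cannot be invoked. And your $\tilde f$ involves $\eta_+u_{-m}$, so ``the low modes entering $\tilde f$ vanish'' already presupposes $u_{-m}=0$, which is precisely the conclusion sought. The companion proposition on positive modes does not help either, since the supports of $f$ and $\tilde f$ give no usable one-sided vanishing on the positive side. (The paper's second proof offers an alternative route that \emph{does} reduce to $m=0$ in one step: multiply $u$ by a nonvanishing $h\in\Omega_m$ so that $hu$ satisfies an attenuated equation with holomorphic source, then remove the attenuation with a holomorphic integrating factor and apply Proposition~\ref{prop:gm}.)
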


\begin{Proposition} \label{proposition_onesidedfourier2}
Let $(M,g)$ be a simple surface, and assume that $u \in C^{\infty}(SM)$ satisfies $Xu = -f$ in $SM$ with $u|_{\partial(SM)} = 0$. If $m \geq 0$ and if $f \in C^{\infty}(SM)$ is such that $f_k = 0$ for $k \geq m+1$, then $u_k = 0$ for $k \geq m$.
\end{Proposition}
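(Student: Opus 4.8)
The plan is to prove Proposition~\ref{proposition_onesidedfourier2} (the other one following by the symmetry $v \mapsto -v$, which conjugates $X$ and sends $H_k$ to $H_{-k}$) by constructing a holomorphic integrating factor that trades the one-sided Fourier condition on $f$ for a genuinely positive term in the Pestov identity of Proposition~\ref{prop_pestov_attenuation}. First I would note that, since $f_k = 0$ for $k \geq m+1$, one can write $f = f_{-} + f_{+}$ where both pieces are supported in Fourier modes $\leq m$. The natural strategy is to multiply the transport equation $Xu = -f$ by a suitable exponential weight. Specifically, one looks for a $1$-form $A$ on $M$ (which via Hodge theory on the simply connected $M$ we may take of the form $A = X_\perp b$ up to an exact term absorbed into $u$), purely imaginary, such that $i \star F_A > 0$, and then by Proposition~\ref{thm_holomorphic_integrating_factors} there is a holomorphic $w$ with $Xw = -A$; setting $\tilde u = e^{sw} u$ for an appropriate real parameter, or more cleanly working with $v := u$ and the attenuated operator $X + sA$, one hopes to shift the relevant Fourier indices.

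The cleaner route, and the one I would actually carry out, is inductive on $m$. The base cases $m = 0$ and $m = 1$ are exactly the content of the standard Pestov argument recalled after Proposition~\ref{prop_pestov_standard}: if $f$ has degree $\leq 1$ then $\norm{Xu}^2 - \norm{VXu}^2 = \norm{f_0}^2 \geq 0$, the curvature term is nonnegative on a simple surface, so all terms vanish and in particular $Vu$ is such that $Xu = 0$, forcing $u = 0$; this kills $u_k$ for all $k$, in particular for $k \geq m$. For the inductive step, suppose the statement holds for $m-1$ and let $f$ have $f_k = 0$ for $k \geq m+1$. The idea is to apply the attenuated Pestov identity to a modified function. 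Choose the $1$-form $A$ purely imaginary with $i\star F_A > 0$ (for instance a primitive of a positive multiple of the area form, which is the attenuation advertised in the introduction for the first proof), and by Proposition~\ref{thm_holomorphic_integrating_factors} pick holomorphic $w$ and antiholomorphic $\tilde w$ with $Xw = X\tilde w = -A$. Then $e^{-w} u$ satisfies $X(e^{-w}u) = e^{-w}(Xu + Au) \cdot$\,(wrong sign bookkeeping to be fixed)\,, and after the correct computation one finds a new transport equation whose right-hand side still has a one-sided Fourier bound but where the positive curvature-type term $\sum_k ik(\star F_A u_k, u_k)$ can be made to dominate the bad term $-\sum_{2 \leq |k| \leq m}(k^2-1)\norm{u_k}^2$ for the top modes, allowing one to peel off $u_m$ and reduce to degree $m-1$.

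Concretely, the mechanism I expect to use is: apply Proposition~\ref{prop_pestov_attenuation} not to $u$ itself but to $(\id - iH)$ applied to a weighted version of $u$, exploiting Proposition~\ref{prop:hxcommutator} to control $[H,X]$, so that the resulting function is holomorphic and hence the attenuation term $\sum_k ik(\star F_A u_k, u_k)$ is a sum over $k \geq 0$ of nonnegative terms. With $i \star F_A$ chosen large enough (rescale $A \mapsto sA$; note $\star F_{sA} = s \star F_A$), the term $\sum_{k \geq 0} sk(\star F_A u_k, u_k)$ beats $\sum_{2 \leq k \leq m}(k^2 - 1)\norm{u_k}^2$ since the latter sum is finite and $u$ has finite degree after the holomorphic projection; then every term in the Pestov identity is forced to vanish, giving $X(\text{holomorphic part}) = 0$ and hence that part is zero by the boundary condition, which yields $u_k = 0$ for $k \geq m$.

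The main obstacle I anticipate is the bookkeeping that makes the attenuation genuinely help rather than just reshuffle terms: one must verify that after conjugating by the integrating factor the new right-hand side $f$ still satisfies a one-sided Fourier condition (so that $\norm{f_0}^2$ or the analogous non-negative contribution survives and no new negative modes appear), and simultaneously that the holomorphic projection $(\id - iH)$ does not destroy the boundary condition $u|_{\partial(SM)} = 0$ or the finite-degree property one needs to bound the sum $\sum(k^2-1)\norm{u_k}^2$. In fact the cleanest formulation is probably to first prove that $u$ has \emph{finite} degree (reducing to Proposition~\ref{proposition_finitedegree}'s hypothesis indirectly is circular, so instead one argues the top Fourier modes of $u$ vanish one at a time): multiply by a holomorphic integrating factor to convert $Xu = -f$ into an equation $X(e^{w}u) = -e^{w} f$ where the right side is now holomorphic-plus-lower-order, apply Proposition~\ref{prop:gm} in spirit, and iterate. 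Reconciling the two attenuations — the $i\star F_A > 0$ needed for the positive Pestov term and the holomorphicity needed for Proposition~\ref{prop:gm} — will be where the real care is required, and I expect the argument to hinge on a single clean energy inequality extracted from Proposition~\ref{prop_pestov_attenuation} applied to a holomorphic function, with the scaling parameter $s$ on the attenuation chosen after the finite degree of $u$ (hence the finiteness of the bad sum) has been established.
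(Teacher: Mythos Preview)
Your proposal assembles the right ingredients --- the attenuated Pestov identity, holomorphic integrating factors, and the sign of $\sum_k ik(\star F_A u_k,u_k)$ on holomorphic test functions --- but the argument does not close, and already the base cases are misstated. The hypothesis for $m=0,1$ is only that $f_k=0$ for $k\geq m+1$; nothing prevents $f$ from having infinitely many nonzero \emph{negative} modes, so $\norm{Xu}^2-\norm{VXu}^2=\sum_k(1-k^2)\norm{f_k}^2$ need not equal $\norm{f_0}^2$ and the unattenuated Pestov identity gives nothing. For the general step you say $s$ is to be chosen \emph{after} the finite degree of $u$ has been established; but that is the conclusion, so the reasoning is circular, as you concede. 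Projecting $u$ (or $e^{sw}u$) by $\id\pm iH$ does give a holomorphic test function $w$ with $(X+A)w$ of finite degree, but then the negative contribution $\sum_{2\leq k\leq m}(k^2-1)\norm{((X+A)w)_k}^2$ involves \emph{derivatives} of $w$, while the positive attenuation term $s\sum_{k\geq 1}k\norm{w_k}^2$ involves only $w$; no choice of $s$ makes the latter dominate the former.

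The paper's (first) proof applies the Pestov identity not to a Hilbert projection but to the \emph{tail} $v:=\sum_{k\geq m+1}\tilde u_k$ of the conjugated solution $\tilde u=e^{sw}u$ (with $w$ antiholomorphic, so that $\tilde f_k=0$ for $k\geq m+1$ persists). Then $(X+A)v$ has a \emph{single} nonzero Fourier coefficient $\mu_- v_{m+1}$, and the dangerous term is $-(m^2-1)\norm{\mu_- v_{m+1}}^2$. The step you are missing is a telescoping identity: from $[\mu_+,\mu_-]=\tfrac{i}{2}(KV+\star F_A)$ and the relation $\mu_+ v_k+\mu_- v_{k+2}=0$ valid on the tail, one rewrites $\norm{\mu_- v_{m+1}}^2$ as the convergent sum $\sum_{k\geq m+1}\bigl(\tfrac{k}{2}(Kv_k,v_k)-\tfrac{s}{2}\norm{v_k}^2\bigr)$. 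Now both the curvature contribution and the attenuation term are in the same currency $\norm{v_k}^2$, and $s>\tfrac{m^2-1}{2}\sup_M K$ forces $v=0$. (The paper's second proof does make your ``reduce to Proposition~\ref{prop:gm}'' idea work, but the decisive move there --- absent from your sketch --- is to first multiply by a nonvanishing $h\in\Omega_{-m}$, which shifts all Fourier indices so that $hf$ is genuinely antiholomorphic before any integrating factor is applied.)
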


We will only prove Proposition \ref{proposition_onesidedfourier1}, the proof of the other result being completely analogous. As a preparation, following Guillemin and Kazhdan in \cite{GK} we introduce the 
first order elliptic operators $\eta_{\pm}:C^{\infty}(SM)\to C^{\infty}(SM)$ given by
\[\eta_{+}:=(X+iX_{\perp})/2,\;\;\;\;\;\;\eta_{-}:=(X-iX_{\perp})/2.\]
Clearly $X=\eta_{+}+\eta_{-}$. Let $\Omega_{k}:=C^{\infty}(SM)\cap H_{k}$. The commutation relations $[-iV,\eta_{+}]=\eta_{+}$ and
$[-iV,\eta_{-}]=-\eta_{-}$ imply that $\eta_{\pm}:\Omega_{k}\to \Omega_{k \pm 1}$. If $A(x,v) = A_j(x) v^j$ where $A$ is a purely imaginary $1$-form on $M$, we also split $A = A_+ + A_-$ where $A_{\pm} \in \Omega_{\pm 1}$ and write 
$$
\mu_+ := \eta_+ + A_+, \quad \mu_- := \eta_- + A_-.
$$
Note that $X+A = \mu_+ + \mu_-$. One has $\mu_{\pm}: \Omega_k \to \Omega_{k \pm 1}$, and 
\begin{equation} \label{mu_adjoint}
(\mu_{\pm} u,v) = -(u,\mu_{\mp} v), \quad u,v \in C^{\infty}(SM) \text{ with } u|_{\partial(SM)} = 0 \text{ or } v|_{\partial(SM)} = 0.
\end{equation}
The following commutator formula was proved in \cite[Lemma 4.3]{P}: 
\begin{equation} \label{mu_commutator}
[\mu_+,\mu_-]u = \frac{i}{2}(KVu + (\star F_A)u).
\end{equation}

\begin{proof}[Proof of Proposition \ref{proposition_onesidedfourier1}]
Assume that $u \in C^{\infty}(SM)$ satisfies $Xu = -f$ in $SM$ and $u|_{\partial(SM)} = 0$, where $f_k = 0$ for $k \leq -m-1$. Splitting in even and odd parts with respect to velocity gives that $X u_{\mp} = -f_{\pm}$ in $SM$, $u_{\mp}|_{\partial(SM)} = 0$. Therefore it is enough to prove the result in two cases: when $f$ is even (so $m$ is even) and $u$ is odd, and when $f$ is odd (so $m$ is odd) and $u$ is even. The proofs for these cases are very similar, and we will only consider the first case.

Assume that $f$ is even, $m$ is even, and $u$ is odd. Let $\omega_g$ be the area form of $(M,g)$ and choose a real valued $1$-form $\varphi$ with $d\varphi = \omega_g$. Consider the unitary connection 
$$
A(x,v) := is \varphi_j(x) v^j
$$
where $s > 0$ is a fixed number to be chosen later. Then $i \star F_A = -s$. By Proposition \ref{thm_holomorphic_integrating_factors}, there exists a holomorphic $w \in C^{\infty}(SM)$ satisfying $Xw = -i\varphi$. We may assume that $w$ is even. The functions $\tilde{u} := e^{sw} u$ and $\tilde{f} := e^{sw} f$ then satisfy 
$$
(X+A) \tilde{u} = -\tilde{f} \text{ in } SM, \quad \tilde{u}|_{\partial(SM)} = 0.
$$
Using that $e^{sw}$ is holomorphic, we have $\tilde{f}_k = 0$ for $k \leq -m-1$. Also, since $e^{sw}$ is even, $\tilde{f}$ is even and $\tilde{u}$ is odd. We now define 
$$
v := \sum_{k=-\infty}^{-m-1} \tilde{u}_k.
$$
Then $v \in C^{\infty}(SM)$, $v|_{\partial(SM)} = 0$, and $v$ is odd. Also, $((X+A)v)_k = \mu_+ v_{k-1} + \mu_- v_{k+1}$. If $k \leq -m-2$ one has $((X+A)v)_k = ((X+A)\tilde{u})_k = 0$, and if $k \geq -m+1$ then $((X+A)v)_k = 0$ since $v_j = 0$ for $j \geq -m$. Also $((X+A)v)_{-m-1} = 0$ because $v$ is odd. Therefore the only nonzero Fourier coefficient is $((X+A)v)_{-m}$, and 
$$
(X+A)v = \mu_+ v_{-m-1} \text{ in } SM, \quad v|_{\partial(SM)} = 0.
$$

We apply the Pestov identity in Proposition \ref{prop_pestov_attenuation} with attenuation $A$ to $v$, so that 
$$
\norm{(X+A)Vv}^2 - (KVv, Vv) + (\star F_A Vv, v) + \norm{(X+A)v}^2 - \norm{V(X+A)v}^2 = 0.
$$
It was proved in \cite[Lemma 6.5]{PSU} that if $(M,g)$ is simple and $v|_{\partial(SM)} = 0$, then 
\begin{equation} \label{pestov_estimate1}
\norm{(X+A)Vv}^2 - (KVv, Vv) \geq 0.
\end{equation}
We also have 
\begin{equation} \label{pestov_estimate2}
(\star F_A Vv, v) = -\sum_{k=-\infty}^{-m-1} i\abs{k}(\star F_A v_k, v_k) = s \sum_{k=-\infty}^{-m-1} \abs{k} \norm{v_k}^2.
\end{equation}
For the remaining two terms, we compute 
$$
\norm{(X+A)v}^2 - \norm{V(X+A)v}^2 = \norm{\mu_+ v_{-m-1}}^2 - m^2 \norm{\mu_+ v_{-m-1}}^2.
$$
If $m = 0$, then this expression is nonnegative and we obtain from the energy identity that $v = 0$. Assume from now on that $m \geq 2$. Using \eqref{mu_adjoint}, \eqref{mu_commutator}, and the fact that $v_k|_{\partial(SM)} = 0$ for all $k$, we have 
\begin{align*}
\norm{\mu_+ v_k}^2 &= \norm{\mu_- v_k}^2 + \frac{i}{2}(K V v_k + (\star F_A) v_k, v_k) \\
 &= \norm{\mu_- v_k}^2 - \frac{s}{2} \norm{v_k}^2 - \frac{k}{2} (K v_k, v_k).
\end{align*}
If $k \leq -m-1$ we also have 
$$
\mu_+ v_{k-1} + \mu_- v_{k+1} = ((X+A)v)_k = 0.
$$
We thus obtain 
\begin{align*}
 &\norm{(X+A)v}^2 - \norm{V(X+A)v}^2 = -(m^2-1) \norm{\mu_+ v_{-m-1}}^2 \\
 &= -(m^2-1) \left[ \norm{\mu_- v_{-m-1}}^2 - \frac{s}{2} \norm{v_{-m-1}}^2 + \frac{m+1}{2} (K v_{-m-1}, v_{-m-1}) \right] \\
 &= -(m^2-1) \left[ \norm{\mu_+ v_{-m-3}}^2 - \frac{s}{2} \norm{v_{-m-1}}^2 + \frac{m+1}{2} (K v_{-m-1}, v_{-m-1}) \right] \\
 &= -(m^2-1) \bigg[ \norm{\mu_- v_{-m-3}}^2 - \frac{s}{2} (\norm{v_{-m-1}}^2 + \norm{v_{-m-3}}^2) \\
 &\qquad \qquad \qquad + \frac{m+1}{2} (K v_{-m-1}, v_{-m-1}) + \frac{m+3}{2} (K v_{-m-3}, v_{-m-3}) \bigg].
\end{align*}
Continuing this process, and noting that $\mu_- v_k \to 0$ in $L^2(SM)$ as $k \to -\infty$ (which follows since $\mu_- v \in L^2(SM)$), we obtain 
\begin{equation} \label{pestov_estimate3}
\norm{(X+A)v}^2 - \norm{V(X+A)v}^2 = \frac{m^2-1}{2} s \sum \norm{v_k}^2 - \frac{m^2-1}{2} \sum \abs{k} (Kv_k, v_k).
\end{equation}

Collecting \eqref{pestov_estimate1}--\eqref{pestov_estimate3} and using them in the Pestov identity implies that 
$$
0 \geq \frac{m^2-1}{2} s \sum \norm{v_k}^2 + \left( s - \frac{m^2-1}{2} \sup_M K \right) \sum \abs{k} \norm{v_k}^2.
$$
If we choose $s > \frac{m^2-1}{2} \sup_M K$, then both terms above are nonnegative and therefore have to be zero. It follows that $v = 0$, so $\tilde{u}_k = 0$ for $k \leq -m-1$ and also $u_k = 0$ for $k \leq -m-1$ since $u = e^{-sw} \tilde{u}$ where $e^{-sw}$ is holomorphic.
\end{proof}

\section{Second proof}
\label{sec:secondproof}
In this section we provide a short alternative proof to the key proposition in the injectivity result.

\begin{proof}[Second proof of Proposition \ref{proposition_onesidedfourier1}]
Suppose that $u$   is a smooth solution of $Xu = -f$   in $SM$ where $f_k = 0$ for $k \leq -m-1$ and $u|_{\partial(SM)} = 0$. We choose a nonvanishing function $h \in \Omega_m$ and define  
$$
A := -h^{-1} Xh.
$$
Observe that since $h\in\Omega_m$, then $h^{-1}\in\Omega_{-m}$. Also $Xh=\eta_{+}h+\eta_{-}h\in\Omega_{m+1}\oplus\Omega_{m-1}$ which implies that $A\in\Omega_{1}\oplus\Omega_{-1}$. It follows that $A$ is the restriction to $SM$ of a (complex-valued) 1-form on $M$.

The function $hu$ solves the problem 
$$
(X + A)(hu) = -hf \text{ in } SM, \quad hu|_{\partial(SM)} = 0.
$$
Note that $hf$ is a holomorphic function. Next we employ a holomorphic integrating factor: by Proposition \ref{thm_holomorphic_integrating_factors} there exists a holomorphic $w \in C^{\infty}(SM)$ with $Xw = A$. The function $e^w h u$ then satisfies 
$$
X(e^w h u) = -e^w h f \text{ in } SM, \quad e^w h u|_{\partial(SM)} = 0.
$$
The right hand side $e^w h f$ is holomorphic. Now Proposition \ref{prop:gm} implies that the solution $e^w h u$ is also holomorphic and $(e^w h u)_0 = 0$. Looking at Fourier coefficients shows that $(h u)_k = 0$   for $k \leq 0$, and therefore $u_k = 0$ for $k \leq -m$ as required.
\end{proof}

Finally, let us explain the choice of $h$ and $A$ in the proof in more detail. Since $M$ is a disk we can consider global isothermal coordinates $(x,y)$ on $M$ such that the metric
can be written as $ds^2=e^{2\lambda}(dx^2+dy^2)$ where $\lambda$ is a smooth
real-valued function of $(x,y)$. As explained in Section \ref{sec:prelim} this gives coordinates $(x,y,\theta)$ on $SM$ where
$\theta$ is the angle between a unit vector $v$ and $\partial/\partial x$. Then $\Omega_m$ consists of all functions $a(x,y) e^{im\theta}$ where $a \in C^{\infty}(M)$. We choose the specific nonvanishing function  
$$
h(x,y,\theta) := e^{im\theta}.
$$ 

Recall that in the $(x,y,\theta)$ coordinates the geodesic vector field $X$ is given by:
\[X=e^{-\lambda}\left(\cos\theta\frac{\partial}{\partial x}+
\sin\theta\frac{\partial}{\partial y}+
\left(-\frac{\partial \lambda}{\partial x}\sin\theta+\frac{\partial\lambda}{\partial y}\cos\theta\right)\frac{\partial}{\partial \theta}\right).\]
The connection $A = -Xh/h$ has the form 
\begin{align*}
A &=ime^{-\lambda}\left(-\frac{\partial \lambda}{\partial y}\cos\theta+\frac{\partial\lambda}{\partial x}\sin\theta\right) = im \left(-\frac{\partial\lambda}{\partial y}dx+\frac{\partial \lambda}{\partial x}dy\right).
\end{align*}
Here as usual we identify $A$ with $A(x,v)$ where $(x,v)\in SM$. This shows that the connection $A$ is essentially the Levi-Civita connection of the metric $g$ on the tensor power bundle $TM^{\otimes m}$, and since $(X+A)h = 0$ we have that $h$ corresponds to a section of the
pull-back bundle $\pi^*(TM^{\otimes m})$ whose covariant derivative along the geodesic vector field vanishes (here $\pi:SM\to M$ is the 
standard projection).

\medskip

\noindent{\it Proof of Theorem \ref{thm:extension}.} An inspection of the proof above
reveals that it also proves Theorem \ref{thm:extension} since we have assumed precisely the hypotheses that allow the use of Propositions \ref{prop:gm} and \ref{thm_holomorphic_integrating_factors}. Also observe that, as pointed out in the proof of Proposition \ref{thm_holomorphic_integrating_factors}, a compact nontrapping surface with strictly convex boundary must be simply connected and hence a disk.

\qed

\end{document}